\newtheorem{theorem}{Theorem}
\newtheorem{conjecture}{Conjecture}
\newtheorem{proposition}{Proposition}
\newcommand{\sz}{wSz}
\begin{document}

\title{
\LARGE\bf 
On Extremal Graphs \\[3mm] of Weighted Szeged Index
}

\author{\bf Jan Bok$^a$,
    Boris Furtula$^{b,}$\footnote{Corresponding author.}\,\,,
    Nikola Jedli\v{c}kov\'{a}$^a$,\\
    \bf Riste \v{S}krekovski$^d$\\[3mm]
\normalsize\it $^a$Computer Science Institute \& Department of Applied
Mathematics,\\ 
\normalsize\it Faculty of Mathematics and Physics,\\
\normalsize\it Charles University, Malostransk\'{e} n\'{a}m\v{e}st\'{i} 25,
11800, Prague, Czech Republic\\
\normalsize {\tt bok@iuuk.mff.cuni.cz\,, jedlickova@kam.mff.cuni.cz}\\[2mm]
\normalsize\it $^b$Faculty of Science, University of Kragujevac,\\
\normalsize\it P.O.Box 60, 34000 Kragujevac, Serbia\\
\normalsize {\tt boris.furtula@pmf.kg.ac.rs}\\[2mm]
%
%
\normalsize\it $^d$ Faculty of Information Studies, 8000 Novo Mesto,\\
\normalsize\it \& Faculty of Mathematics and Physics, University of Ljubljana, 
1000 Ljubljana, \\
\normalsize\it \&  FAMNIT, University of Primorska, 6000 Koper, Slovenia \\
\normalsize {\tt skrekovski@gmail.com}
}

\date{}

\maketitle

\thispagestyle{empty}

\begin{abstract}
An extension of the well-known Szeged index was introduced recently, named as
{\it weighted Szeged index ($\sz(G)$)\/}. This paper is devoted to characterizing
the extremal trees and graphs of this new topological invariant. In particular,
we proved that the star is a tree having the maximal $\sz(G)$\,. Finding a tree
with the minimal $\sz(G)$ is not an easy task to be done. Here, we present the
minimal trees up to 25 vertices obtained by computer and describe the regularities
which retain in them. Our preliminary computer tests suggest that a tree
with the minimal $\sz(G)$ is also the connected graph of the given order that
attains the minimal weighted Szeged index. Additionally, it is proven that among
the bipartite connected graphs the complete balanced bipartite graph
$K_{\left\lfloor n/2\right\rfloor\left\lceil n/2 \right\rceil}$ attains the maximal
$\sz(G)$\,. We believe that the $K_{\left\lfloor n/2\right\rfloor\left\lceil n/2 
\right\rceil}$ is a connected graph of given order that attains the maximum 
$\sz(G)$\,.


%
%
%

\end{abstract}

\section{Introduction}


An idea that the structure of a molecule governs its physico--chemical properties
is as old as modern chemistry. The problem of quantifying chemical structures was
overcome by introducing the so-called molecular descriptors, which are the carriers
of molecular structure information. These structural invariants are usually numbers
or set of numbers. Nowadays, there are thousands of molecular descriptors that are
commonly used in various disciplines of chemistry \cite{ToCo09}.

Topological indices are significant class among the molecular descriptors. 
The oldest and one of the most investigated topological indices is Wiener index,
introduced in 1947 \cite{Wien47}. Although its definition is based on distances
($d(x,y)$) among pairs of vertices $(x,y)$ in a graph $G$\,, in the seminal paper
\cite{Wien47}, Wiener used the following formula for its calculation: 
\begin{equation}\label{wiener}
  W(T) = \sum_{e=\{uv\}\in E(T)} n_u(e)\cdot n_v(e)
\end{equation}
where $n_u(e)$ is cardinality of the set $N_u(e=\{uv\}) = \{ x \in V(G): d(x,u)
< d(x,v) \}$\,.

Formula \eqref{wiener} is valid only for acyclic molecules that are in chemical
graph theory represented by trees. This fact triggered the introduction of a
novel topological index, which definition coincides with Eq. \eqref{wiener}, but
its scope of usability is widen to all simple connected graphs. The name of this
invariant is Szeged index, $Sz(G)$ \cite{Gutm94} and it is defined as
\begin{equation}\label{szeged}
Sz(G) = \sum_{e=\{uv\}\in E(G)} n_u(e)\cdot n_v(e)
\end{equation}
Shortly after its introduction, Szeged index was attracted much attention in the
mathematical chemistry community. Nowadays there is a vast literature presenting
scientific researches deeply related to the Szeged index (e.g. for some recent
results see \cite{Dobr18, JiHo18, Trat17} and references cited therein).

Inspired by an extension of the Wiener index, today known as degree distance
\cite{DoKo94, Gutm94DD}, Ili\'c and Milosavljevi\'c proposed a modification
of the Szeged index \cite{ilic} as a topological invariant that is worthy of
investigations. This topological descriptor is named as \textit{weighted
Szeged index} and is defined as follows:
\begin{equation}\label{wszeged}
\sz(G) = \sum_{e=\{uv\} \in E(G)} [\deg(u)+\deg(v)]\cdot n_u(e)\cdot n_v(e)
\end{equation}
where $\deg(u)$ is degree of the vertex $u$\,.

Half of decade has been passed since this invariant was introduced, but till
today only some mediocre research activities involving $\sz(G)$ \cite{NaPa14, %
PaKa16, PaKa18} were performed. Therefore, this paper contributes to the 
answering the elemental questions that are arising when someone works with
a novel topological descriptor. In particular, the next section is reserved
for the results and conjectures on graphs that are maximizing the $\sz(G)$\,,
and the other one to the characterization of graphs that are minimizing it. 

In the paper we will use the following definitions. By the \emph{star graph}
$S_n$ we mean the complete bipartite graph $K_{1,n-1}$. We say that some vertex
is an \emph{internal leaf} of a graph $G$ if this vertex becomes a leaf after
removing all leaves of $G$. The subgraph of $G$ induced by a set of vertices
$B\subseteq V(G)$ is denoted by $G[B]$.

\section{On graphs having maximum weighted Szeged\newline index}

The simplest connected graphs are trees and they are usually the first
class of graphs on which researchers are testing various properties and
behaviors of novel topological descriptors. Thence, we are starting this
section on graphs that are maximizing weighted Szeged index by characterizing
a tree having maximum $\sz(G)$\,. 

\vspace{3mm}

\begin{theorem} \label{thm:maximum}
	A tree with the maximum weighted Szeged index among $n$-vertex trees
	is the star graph $S_n$.
\end{theorem}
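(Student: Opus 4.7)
The plan is to recast $\sz(T)$ as a vertex-sum in which the degree of each vertex appears multiplicatively alongside the balance of its branches, and then invoke convexity.

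First, I would regroup the edge contributions by vertex. Splitting $(\deg u+\deg v)\,n_u(e)n_v(e)=\deg(u)\,n_u(e)n_v(e)+\deg(v)\,n_u(e)n_v(e)$ and assigning each summand to its ``owning'' endpoint yields
\[
\sz(T)=\sum_{v\in V(T)}\deg(v)\sum_{u\in N(v)} n_u(vu)\,n_v(vu).
\]
For a fixed $v$, the neighbors $u_1,\dots,u_{\deg v}$ correspond to the $\deg(v)$ components of $T-v$, of sizes $n_1(v),\dots,n_{\deg v}(v)$ summing to $n-1$, and $n_{u_i}(vu_i)\,n_v(vu_i)=n_i(v)(n-n_i(v))$. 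Using $\sum_v\deg(v)=2(n-1)$, this rearranges to
\[
\sz(T) \;=\; 2n(n-1)^2 \;-\; \sum_{v\in V(T)}\deg(v)\sum_{i=1}^{\deg v} n_i(v)^2.
\]

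Second, I would apply Cauchy--Schwarz vertex by vertex: $\deg(v)\sum_i n_i(v)^2\ge (n-1)^2$, with equality exactly when all branches at $v$ have the same size. Summing over the $n$ vertices gives $\sum_v\deg(v)\sum_i n_i(v)^2\ge n(n-1)^2$, so
\[
\sz(T) \;\le\; 2n(n-1)^2 - n(n-1)^2 \;=\; n(n-1)^2 \;=\; \sz(S_n).
\]

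Finally, I would settle the equality case. Equality forces every vertex $z$ to split $T-z$ into components of equal size (leaves being automatic). For a non-leaf $z$ of degree $d\ge 2$ with branches of common size $m=(n-1)/d$, suppose $m\ge 2$ and let $w$ be the neighbor of $z$ in some branch $C$; since $|C|\ge 2$, the vertex $w$ is itself a non-leaf. Balance at $w$ demands that all components of $T-w$ have the same size; since the one containing $z$ has size $n-m$, this yields $\deg(w)=(n-1)/(n-m)$. Requiring $\deg(w)\ge 2$ forces $m\ge(n+1)/2$, contradicting $m\le(n-1)/2$. Hence $m=1$, every non-leaf has only leaves as neighbors, and $T=S_n$.

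The principal obstacle is this equality analysis: one must rule out exotic ``balanced'' trees that could in principle satisfy Cauchy--Schwarz equality at every vertex simultaneously. The identity in Step~1 is the decisive conceptual move; Steps~2 and~3 then follow in short order.
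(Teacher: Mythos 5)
Your proof is correct, and it takes a genuinely different route from the paper. The paper argues by local transformation: given a non-star tree, it locates an internal leaf $u$ adjacent to a non-leaf $v$, contracts the edge $uv$ into a pendant vertex, and shows by a somewhat lengthy estimate (with a case split on $\deg(v)$) that this strictly increases the index, so only the star can be extremal. You instead prove a global identity, $\sz(T)=2n(n-1)^2-\sum_{v}\deg(v)\sum_i n_i(v)^2$, by charging each edge term to its two endpoints and using $\sum_i n_i(v)=n-1$ and $\sum_v\deg(v)=2(n-1)$; I checked this identity on small paths and it is right. Cauchy--Schwarz at each vertex then gives the clean closed-form bound $\sz(T)\le n(n-1)^2=\sz(S_n)$ in one line, and your equality analysis (balance at a non-leaf $z$ forces branch size $m=(n-1)/d\le(n-1)/2$, while balance at a neighbour $w$ inside a branch of size $m\ge 2$ forces $\deg(w)=(n-1)/(n-m)\ge 2$, hence $m\ge(n+1)/2$, a contradiction) correctly pins down the star as the unique extremal tree. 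What your approach buys is an explicit sharp numerical bound and a shorter, case-free argument; what the paper's exchange argument buys is a reusable local move of the kind its later propositions (on minimal trees) also rely on. One cosmetic remark: it is worth stating explicitly that for trees $n_u(e)+n_v(e)=n$ on every edge (no equidistant vertices), since that is what makes $n_{u_i}(vu_i)n_v(vu_i)=n_i(v)(n-n_i(v))$ and hence the whole identity work; this fails for general graphs, which is why the argument is genuinely tree-specific.
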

\begin{proof}
	Contrary to the statement in the Theorem \ref{thm:maximum}, let us assume 
	that there exists an extremal tree $T$ with $n$ vertices which is not a
	star. Since $T$ is a non-star tree, there exists an edge $uv$ of $T$
	such that $u$ is an internal leaf and $v$ is a non-leaf vertex of $T$\,.
	For the sake of simplicity, let us label $\deg(u)$ with $a$ and $\deg(v)$
	with $b$\,. Let $x_1, x_2, \ldots, x_{a-1}$ be the leaves attached at $u$.
	Also, let	$y_1, y_2, \ldots, y_{b-1}$ be the neighbors of $v$ distinct from
	$u$. We denote by $Y_i$ the set of vertices of the $T - \{vy_i\}$ containing
	the vertex $y_i$. Without loss of generality, it is assumed that $Y_1$ is a
	biggest such set, i.e. $|Y_1| \ge |Y_i|$ for every $i > 1$.
	
	Let us denote by $A$ the set of vertices of the $T-v$ containing the vertex
	$u$ and	by $B = V(T) \setminus A$. Note that $|A| + |B| = n$, $|A|=a$, and
	$|A|\,,\,|B|\geq 2$.
	
	It will be demonstrated that the transformation of a tree $T$ into $T'$,
	shown in Figure \ref{fig:contraction} (a contraction of the edge $uv$
	in the tree $T$ into a leaf $w$ in the tree $T'$\,), will always raise
	the $\sz(G)$\,, i.e. $\Delta := \sz(T') - \sz(T) > 0$.
	\begin{figure}[H]
		\centering
		\includegraphics[width=.8\linewidth]{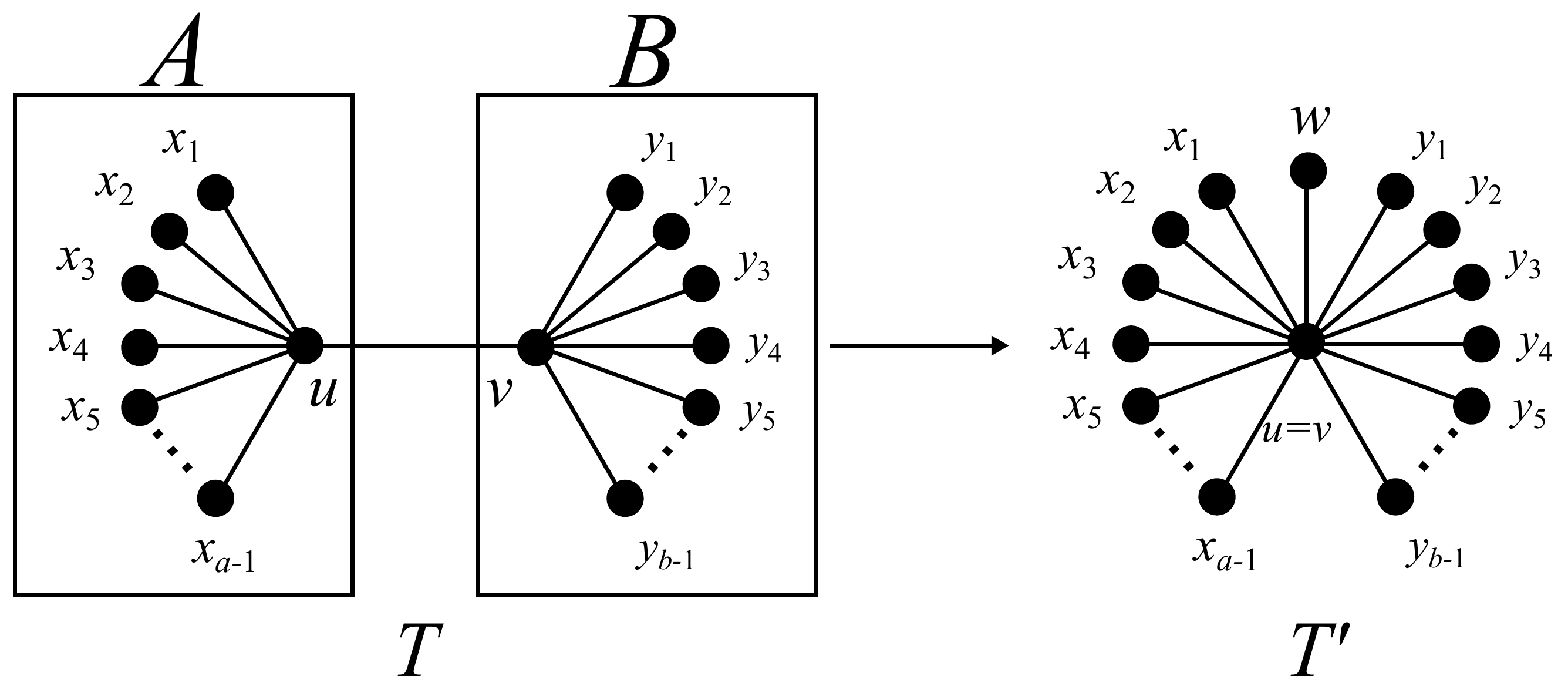}
		\caption{}
		\label{fig:contraction}
	\end{figure}
	
	Note that the contribution of edges in $T[B-v]$ stays the same in $\sz(T')$
	and $\sz(T)$. Thus, we can write
	$$
		\Delta = \sum_{i=1}^{a-1} (b-1)n_{x_i}(x_iu)n_u(x_iu) 
					 + \sum_{i=1}^{b-1} (a-1)n_{y_i}(y_iv)n_v(y_iv) 
					 + (a+b)(n-1) - (a+b)|A||B| \ .
	$$
	
	We know that $x_1, x_2, \ldots, x_{a-1}$ are leaves from which follows 
	$n_{x_i}(x_iu) = 1$ for all $i$ and $n_u(x_iu) = n-1$\,. By a substitution
	we get 
	$$
		\Delta = (a-1)(b-1)(n-1) + (a-1)\sum_{i=1}^{b-1}n_{y_i}(y_iv)n_v(y_iv)
					 + (a+b)(n-1) - a(a+b)(n-a) \ .
	$$
	
	Now we want to argue that $\sum_{i=1}^{b-1}n_{y_i}(y_iv)n_v(y_iv)$ is minimal
	if $y_2, y_3, \ldots, y_{b-1}$ are leaves (the situation is depicted in
	Figure \ref{fig:minimizing}). Assume for contradiction that $|Y_i| > 1$ for some
	$i > 1$.
	\begin{figure}[H]
		\centering
		\includegraphics[width=.3\linewidth]{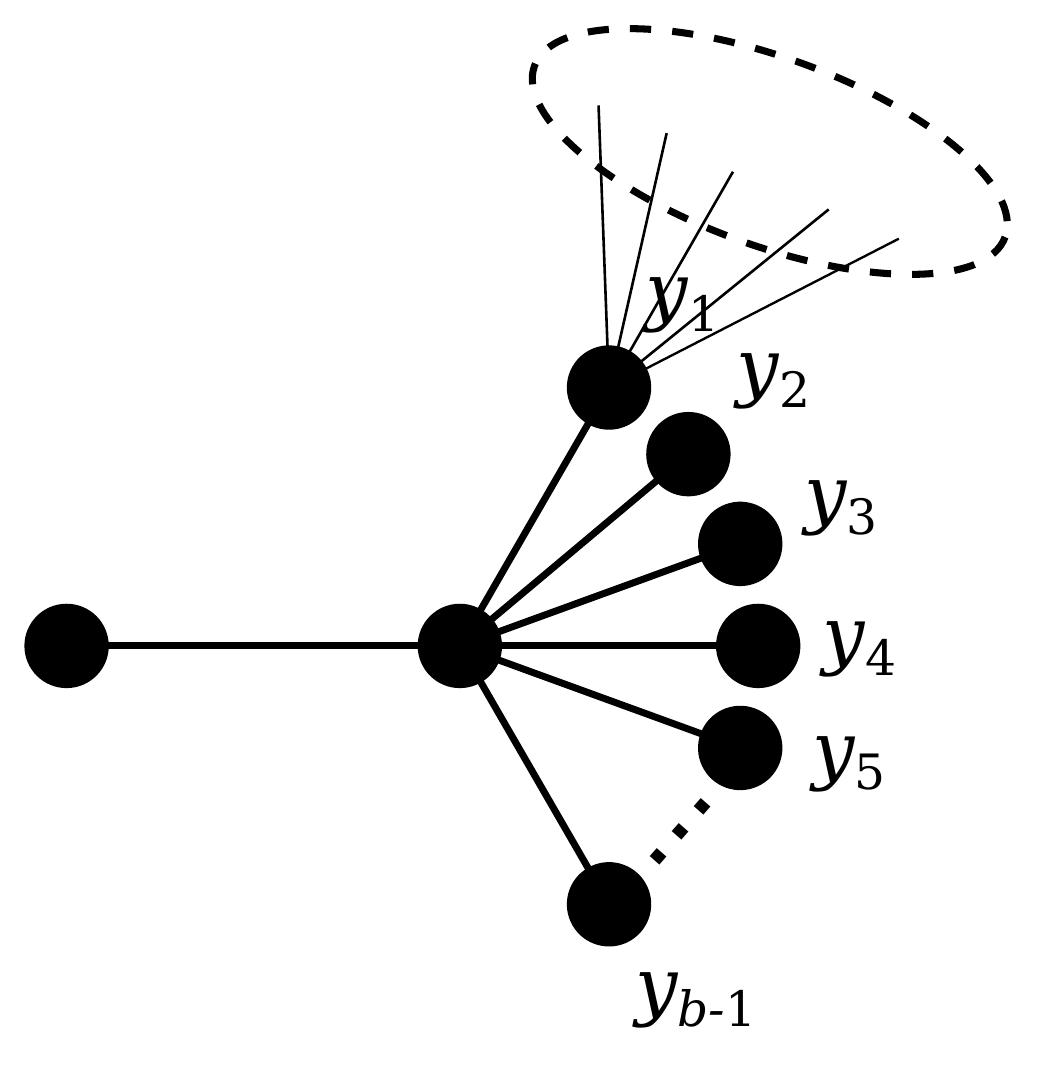}
		\caption{}
		\label{fig:minimizing}
	\end{figure}
	
	Observe that $\sum_{i=1}^{b-1}|Y_i| = n - a - 1$. We know that
	$$
		\sum_{i=1}^{b-1}n_{y_i}(y_iv)n_v(y_iv) = \sum_{i=1}^{b-1} |Y_i| (n - |Y_i|)
		= n\sum_{i=1}^{b-1} |Y_i| - \sum_{i=1}^{b-1} |Y_i|^2 \ .
	$$
	
	Using a well known fact that $(x+1)^2 + (y-1)^2 > x^2 + y^2$ for $x > y$ and
	$x,y \in \mathbb N$\,, we conclude that $\sum_{i=1}^{b-1} |Y_i|^2$ attains the maximum
	value if $|Y_1| = n - a - b + 1$ and $|Y_i| = 1$ for $1 < i \leq b-1$\,.
	
	By using this fact we get that 
	$$
		\sum_{i=1}^{b-1}n_{y_i}(y_iv)n_v(y_iv) \geq (b-2)(n-1) + n(a+b-1)-(a+b-1)^2\,,
	$$ 
	and thus 
	\begin{align*} 
		\Delta & \geq (b-1)(a-1)(n-1) + (a-1) \left( (b-2)(n-1) + n(a+b-1)-(a+b-1)^2 \right)\\
					 & + (a+b)(n-1) - (a+b)a(n-a) \ .
	\end{align*}
	It is enough to show that $\Delta' = 2bn - ab - b^2 + 3a + b - 4n + 2 > 0$\,. For this,
	we distinguish three cases for the value of $b$. For $b=2$ and $b=3$ we obtain respectively
	$$
		\Delta'= 4n - 2a - 4 + 3a + 2 - 4n + 2 > 0
	$$
	and
	$$
		\Delta'= 6n - 3a - 9 + 3a + 3 - 4n + 2 > 0 \ .
	$$
	
	Suppose now that $b \ge 4$. Together with an observation that $bn \ge b(a+b) = ab + b^2$
	we obtain
	$$
		\Delta' \ge bn - 4n + 3a + b + 2 > 0 \ .
	$$
	This completes the proof.
\end{proof} 

\vspace{3mm}

\begin{proposition}\label{prop:bipartite}
	Among all bipartite graphs the balanced complete bipartite graph 
	$K_{\left\lfloor n/2 \right\rfloor, \left\lceil n/2 \right\rceil}$
	achieves the maximum weighted Szeged index.
\end{proposition}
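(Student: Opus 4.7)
The plan is to prove an upper bound on $\sz(G)$ valid for every connected bipartite graph on $n$ vertices and then to verify that $K_{\lfloor n/2\rfloor,\lceil n/2\rceil}$ meets it with equality. The strategy is to bound each of the three factors appearing in the definition of $\sz(G)$ separately and to multiply them, exploiting three elementary facts that become tight simultaneously on the balanced complete bipartite graph.

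The essential input is the parity observation that in a bipartite graph, for any edge $e=uv$ and any vertex $w$, the distances $d(w,u)$ and $d(w,v)$ have opposite parities, so no vertex is equidistant from $u$ and $v$; this forces $n_u(e)+n_v(e)=n$. Since $n_u(e)$ and $n_v(e)$ are positive integers with fixed sum $n$, the integer form of AM--GM gives $n_u(e)\cdot n_v(e)\le \lfloor n/2\rfloor\lceil n/2\rceil$. Furthermore, if $(X,Y)$ is the bipartition of $G$ with $|X|=p$, $|Y|=q$, then $\deg u+\deg v\le p+q=n$ for every edge $uv$, because the neighbours of each endpoint lie only in the opposite part. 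Finally, the standard edge bound for bipartite graphs gives $|E(G)|\le \lfloor n/2\rfloor\lceil n/2\rceil$.

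Multiplying the three bounds produces
\[
\sz(G) \;=\; \sum_{uv\in E(G)} (\deg u+\deg v)\,n_u(e)\,n_v(e) \;\le\; n\bigl(\lfloor n/2\rfloor\lceil n/2\rceil\bigr)^{2}.
\]
To finish, I would compute $\sz(K_{\lfloor n/2\rfloor,\lceil n/2\rceil})$ directly: every one of its $\lfloor n/2\rfloor\lceil n/2\rceil$ edges $uv$ satisfies $\deg u+\deg v=n$ and $n_u(e)=\lceil n/2\rceil$, $n_v(e)=\lfloor n/2\rfloor$, so all three bounds above are attained simultaneously and $\sz(K_{\lfloor n/2\rfloor,\lceil n/2\rceil})= n(\lfloor n/2\rfloor\lceil n/2\rceil)^{2}$. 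I do not foresee a real obstacle; the only non-routine ingredient is the parity observation that pins $n_u(e)+n_v(e)$ to $n$, after which the proposition reduces to integer AM--GM and the textbook edge count for bipartite graphs.
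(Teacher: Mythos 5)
Your proposal is correct and follows essentially the same route as the paper's proof: bound $n_u(e)\,n_v(e)$ by $\lfloor n^2/4\rfloor$ using $n_u(e)+n_v(e)=n$, bound $\deg(u)+\deg(v)$ by $n$, bound the edge count by $\lfloor n^2/4\rfloor$, multiply, and check equality on $K_{\lfloor n/2\rfloor,\lceil n/2\rceil}$. Your explicit parity justification of $n_u(e)+n_v(e)=n$ is a welcome detail the paper merely asserts.
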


\begin{proof}
	For an edge $uv$ in a bipartite graph it holds that the $n_u + n_v = n$\,.
	Knowing this fact, it is obvious that 
	$$
		\max{[n_u\cdot n_v]} = \left\lfloor\frac{n^2}{4}\right\rfloor
	$$
	Also it is well-known that the maximum number of edges in a bipartite
	graph is $\left\lfloor n^2/4 \right\rfloor$\,, and $\max{[\deg(u) +
	\deg(v)]} = n$\,. Then,
	$$
		\sz(G) \leq n \cdot \left\lfloor\frac{n^2}{4}\right\rfloor ^ 2 \ .
	$$
	Equality holds if and only if $G\cong K_{\left\lfloor n/2 \right\rfloor,
	\left\lceil n/2 \right\rceil}$\,.
\end{proof}

Preliminary computer calculations indicate that the balanced complete bipartite
graph has the maximal weighted Szeged index among all connected graphs. Taking 
into account this and the fact that the same graph is maximal for the ordinary
Szeged index \cite{Dobr97, ChWu09}, we believe that the following conjecture is
true.

\vspace{3mm}

\begin{conjecture}\label{con:maxG}
	For $n$-vertex graph the maximum weighted Szeged index is attained by the
	balanced complete bipartite graph  $K_{\lceil n/2 \rceil, \lfloor n/2 \rfloor}$.
\end{conjecture}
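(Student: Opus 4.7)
The plan is to generalize Proposition~\ref{prop:bipartite} to all connected graphs by introducing, for every edge $e=uv$, the parameter $c_e := |N(u)\cap N(v)|$ (the number of triangles through $e$) and using it to control both factors of the corresponding summand of $\sz(G)$. Since every vertex of $N(u)\cap N(v)$ is at distance $1$ from both endpoints and hence equidistant from them,
\[
  n_u(e) + n_v(e) \;\le\; n - c_e \qquad\text{and}\qquad n_u(e)\,n_v(e) \;\le\; \bigl\lfloor (n-c_e)^2/4\bigr\rfloor .
\]
On the other hand $|N(u)\cup N(v)| = \deg(u)+\deg(v) - c_e \le n$ yields $\deg(u)+\deg(v) \le n + c_e$. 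Multiplying these two estimates and comparing with the extremal case $c_e = 0$ gives the per-edge bound
\[
  \bigl(\deg u + \deg v\bigr)\,n_u(e)\,n_v(e) \;\le\; n\,\lfloor n^2/4\rfloor ,
\]
with equality iff $c_e=0$, $\deg(u)+\deg(v)=n$, and $\{n_u(e),n_v(e)\}=\{\lfloor n/2\rfloor,\lceil n/2\rceil\}$.

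If $G$ is triangle-free, every $c_e=0$ and Tur\'an's theorem gives $|E(G)|\le\lfloor n^2/4\rfloor$; summing the per-edge bound yields $\sz(G)\le n\lfloor n^2/4\rfloor^2$, with equality forcing $G$ to be the unique Tur\'an graph $K_{\lfloor n/2\rfloor,\lceil n/2\rceil}$ (which is bipartite, so Proposition~\ref{prop:bipartite} handles the equality analysis). When $G$ contains triangles, I would quantify the per-edge slack via $\delta(c) := n\lfloor n^2/4\rfloor - (n+c)\lfloor (n-c)^2/4\rfloor$; a short case split on the parity of $c$ shows $\delta(c) \ge c\cdot\delta(1)$ with $\delta(1) \ge \lfloor n^2/4\rfloor$. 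Summing the refined per-edge bound and using the identity $\sum_e c_e = 3\,T(G)$, where $T(G)$ denotes the number of triangles of $G$, gives
\[
  \sz(G) \;\le\; m\cdot n\,\lfloor n^2/4\rfloor \;-\; 3\,T(G)\cdot\lfloor n^2/4\rfloor ,
\]
where $m = |E(G)|$. The desired inequality $\sz(G)\le n\lfloor n^2/4\rfloor^2$ thus reduces to the purely extremal statement $3\,T(G) \ge n\bigl(m - \lfloor n^2/4\rfloor\bigr)$, which I would settle by combining the Moon-Moser inequality $T(G)\ge m(4m-n^2)/(3n)$ for $m$ comfortably above $n^2/4$ with the Rademacher-Erd\H{o}s-Lov\'asz-Simonovits bound $T(G)\ge q\lfloor n/2\rfloor$ when $m = \lfloor n^2/4\rfloor + q$ for $q$ in the narrow transitional range.

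Tracking the equality case back through the argument forces $c_e=0$ for every edge (so $G$ is triangle-free), $m = \lfloor n^2/4\rfloor$ (so $G$ is the Tur\'an graph), and $n_u(e)+n_v(e) = n$ on every edge (so $G$ is bipartite), giving $G\cong K_{\lfloor n/2\rfloor,\lceil n/2\rceil}$. The main obstacle is the careful interleaving of extremal bounds on $T(G)$ near the Tur\'an threshold, especially for odd $n$: the real-valued per-edge bound $(n+c)(n-c)^2/4$ is strictly weaker than $n\lfloor n^2/4\rfloor$ only when $c\ge 1$, so the argument really depends on the integer refinement, and Moon-Moser alone turns out to be just insufficient for $m = \lfloor n^2/4\rfloor + q$ with very small $q$; it is exactly in this regime that the Rademacher-type theorems supplying at least $q\lfloor n/2\rfloor$ triangles are indispensable to close the gap.
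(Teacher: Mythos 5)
First, an important caveat: the statement you are proving is Conjecture~\ref{con:maxG}, which the paper does \emph{not} prove. The paper only establishes the bipartite case (Proposition~\ref{prop:bipartite}) by multiplying trivial per-edge bounds, and supports the general case with computation. So there is no proof in the paper to compare against; your proposal must be judged as an attack on an open problem, and any verdict of ``correct'' deserves extra scrutiny.

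With that said, the skeleton checks out as far as I can verify it. The three per-edge estimates are right (since $u\in N_u(e)$, $v\in N_v(e)$ and common neighbours are equidistant, $n_u(e)+n_v(e)\le n-c_e$; inclusion--exclusion gives $\deg(u)+\deg(v)\le n+c_e$), the identity $\sum_e c_e=3T(G)$ is standard, and the reduction to $3T(G)\ge n\bigl(m-\lfloor n^2/4\rfloor\bigr)$ goes through \emph{provided} $\delta(c)\ge c\lfloor n^2/4\rfloor$. That inequality is true (a parity check on $n$ and $c$ reduces the worst case to $(c^2-1)(n-c)\ge0$), but your intermediate claim $\delta(c)\ge c\,\delta(1)$ is false for even $n$: there $\delta(1)=(n^2+2n)/4$ while $\delta(2)=(2n^2+4n-8)/4=2\delta(1)-2$. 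Prove $\delta(c)\ge c\lfloor n^2/4\rfloor$ directly and drop the comparison with $\delta(1)$. The supersaturation step also works: Moon--Moser gives $3T\ge m(4m-n^2)/n = n(m-n^2/4)+(4m-n^2)^2/(4n)$, which settles even $n$ for all $m>n^2/4$ and odd $n$ once $q\ge(n+1)/4$, while Lov\'asz--Simonovits ($T\ge q\lfloor n/2\rfloor$ for $0<q<n/2$) gives $3T\ge 3q(n-1)/2> qn$ for $n\ge5$ in the remaining range; the case $m\le\lfloor n^2/4\rfloor$ is trivial. Two further repairs: the uniqueness claim fails at $n=3$, where every inequality is tight for $K_3$ and $\sz(K_3)=12=\sz(K_{1,2})$ (the conjecture itself survives, since $K_{1,2}$ still attains the maximum); and for odd $n$ an edge in exactly one triangle loses nothing in the $\delta$ step, so strictness must be extracted from $3T>n(m-\lfloor n^2/4\rfloor)$, which holds only for $n\ge5$. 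If you write all of this out in full detail, you will have resolved the conjecture --- which is precisely why you should write it out and have it checked independently rather than leave it as a sketch.
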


\section{On graphs having minimum weighted Szeged\newline index}

Problem of characterizing graphs with the minimal weighted Szeged index is more
complex than with the graphs that maximizing it. Computer explorations of graphs
having the minimal weighted Szeged index lead to the following conjecture:

\vspace{3mm}

\begin{conjecture}\label{con:minG}
	For $n$-vertex graphs the minimum weighted Szeged index is attained by a tree.
\end{conjecture}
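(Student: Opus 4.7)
The plan is to prove Conjecture~\ref{con:minG} by induction on the cyclomatic number $\mu(G)=|E(G)|-|V(G)|+1$. The base case $\mu(G)=0$ is trivial, since then $G$ is already a tree. The inductive step reduces to the following key lemma, which is where essentially all of the work lies: for every connected graph $G$ containing at least one cycle there exists a cycle edge $e$ with $\sz(G-e)<\sz(G)$. Iterating the lemma produces a spanning tree $T\subseteq G$ with $\sz(T)<\sz(G)$, so any minimizer must be a tree.

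To attack the key lemma for a chosen cycle edge $e=uv$, I would split $\sz(G)-\sz(G-e)$ into three groups of contributions: (i) the contribution of $e$ itself, which is lost entirely and yields the positive term $[\deg_G(u)+\deg_G(v)]\,n_u^G(e)\,n_v^G(e)$; (ii) the change on edges $f\neq e$ incident to $u$ or $v$, where the factor $\deg(x)+\deg(y)$ drops by exactly $1$ and the $n$-values may shift; and (iii) the change on edges $f$ disjoint from $\{u,v\}$, where only the $n$-values can shift, because removing $e$ can only weakly lengthen shortest paths. The aim is to show that (i) together with the aggregate loss in (ii) dominates the worst-case perturbation in (iii).

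The principal obstacle is exactly part (iii). Although edge removal is monotone on distances, the quantities $n_x(f)$ and $n_y(f)$ for a fixed edge $f=xy$ are \emph{not} monotone under edge removal: a vertex strictly closer to $x$ in $G$ may become equidistant, or even strictly closer to $y$, in $G-e$, so the product $n_x(f)n_y(f)$ can move in either direction. A crude global bound therefore seems hopeless. My plan for circumventing this is to choose $e$ carefully. Two natural candidates are (a) an edge on a shortest cycle of $G$, so that the set of pairs with affected distances is confined to a $g(G)$-bounded neighbourhood of the cycle, and (b) a cycle edge incident to a maximum-degree vertex, so that the aggregate loss in (ii) is as large as possible. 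An averaging argument over a single short cycle may also help: if no individual edge of a cycle $C$ worked, summing $\sz(G)-\sz(G-e)$ over $e\in E(C)$ should produce a contradiction in which only edges close to $C$ participate.

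As a warm-up I would first settle the unicyclic case, where part (iii) can be enumerated directly, since removing a cycle edge changes only the distances between pairs of vertices lying on opposite arcs of the unique cycle. A clean proof for unicyclic graphs should suggest the correct form of the comparison inequality and the right choice of $e$ in general. After this I would attempt bicyclic graphs and then look for a uniform inductive scheme. Barring a monotonicity principle for $n$-values specific to cycle-edge removals, the inductive step will remain the main difficulty, and it is precisely where a full resolution of Conjecture~\ref{con:minG} will have to concentrate.
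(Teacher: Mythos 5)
First, you should know that the paper does not prove this statement: it is posed purely as a conjecture, motivated by computer search, and the authors explicitly list its proof among the open tasks in the conclusion. So the question is only whether your plan could settle it. It cannot in its present form. The minor issue is that you leave your key lemma unproved and say so yourself, which already means this is a research programme rather than a proof. The major issue is that the key lemma --- for every connected non-tree $G$ there is a cycle edge $e$ with $\sz(G-e)<\sz(G)$ --- is \emph{false}, so the induction on the cyclomatic number cannot even begin for dense graphs.

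A concrete counterexample is $K_4$. Every vertex of $K_4$ other than the two endpoints of an edge $uv$ is equidistant from $u$ and $v$, so $n_u(e)=n_v(e)=1$ for every edge and $\sz(K_4)=6\cdot(3+3)\cdot 1\cdot 1=36$. Now delete any edge $uv$ (all six are cycle edges). In $K_4-uv$ the edge $xy$ joining the two untouched vertices still contributes $(3+3)\cdot 1\cdot 1=6$, but each of the four remaining edges, say $ux$, now has $\deg(u)+\deg(x)=2+3=5$ and, because $v$ has become \emph{strictly} closer to $x$ than to $u$, it satisfies $n_u=1$, $n_x=2$; its contribution is $10$. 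Hence $\sz(K_4-uv)=4\cdot 10+6=46>36$, and by symmetry this holds for every edge. The same tie-breaking phenomenon makes $\sz(K_n-e)>\sz(K_n)$ for all $n\ge 4$. This shows that the effect you isolate in part (iii) --- equidistant vertices becoming strictly closer to one endpoint after an edge deletion --- is not a perturbation to be bounded away but the dominant term, and that no choice of cycle edge (shortest cycle, maximum-degree endpoint, or averaging over a cycle) can rescue the lemma, since in $K_n$ all edges are equivalent. Note that the conjecture itself survives this example ($\sz(P_4)=34<36$), which tells you that any correct proof must compare $G$ globally with some tree rather than descend to a spanning tree one monotone edge deletion at a time. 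Your unicyclic warm-up is still a reasonable and probably tractable first step, but the uniform inductive scheme built on single-edge deletions has to be abandoned.
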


Accepting the Conjecture \ref{con:minG} as true, we were pursuing for the minimal
trees. The results of such computer search are presented in the Table \ref{tab:trees}. 

\begin{center}
	\newcolumntype{M}[1]{>{\centering\arraybackslash}m{#1}}
	\begin{longtable}{rcM{12cm}}
		\caption{Trees from 7 to 25 vertices having minimum weighted Szeged index.}\\
		\label{tab:trees}\\
		\toprule[2pt]
		\multicolumn{1}{c}{\# vertices} & \multicolumn{1}{c}{\,} &
		\multicolumn{1}{c}{Trees with minimal $\sz(G)$ index} \\
		\midrule \\[2mm]
		\endfirsthead

		\multicolumn{3}{c}{\hdashrule{.9\textwidth}{0.3pt}{5mm}}\\
		\multicolumn{1}{c}{\# vertices} & \multicolumn{1}{c}{\,} &
		\multicolumn{1}{c}{Trees with minimal $\sz(G)$} \\
		\midrule \\[2mm]
		\endhead
		
		\multicolumn{3}{c}{Table \ref{tab:trees} continues on the next page} \\
		\midrule
		\endfoot
		
		\bottomrule[2pt]
		\endlastfoot
		
		7  &\,& \includegraphics[width=.6\linewidth]{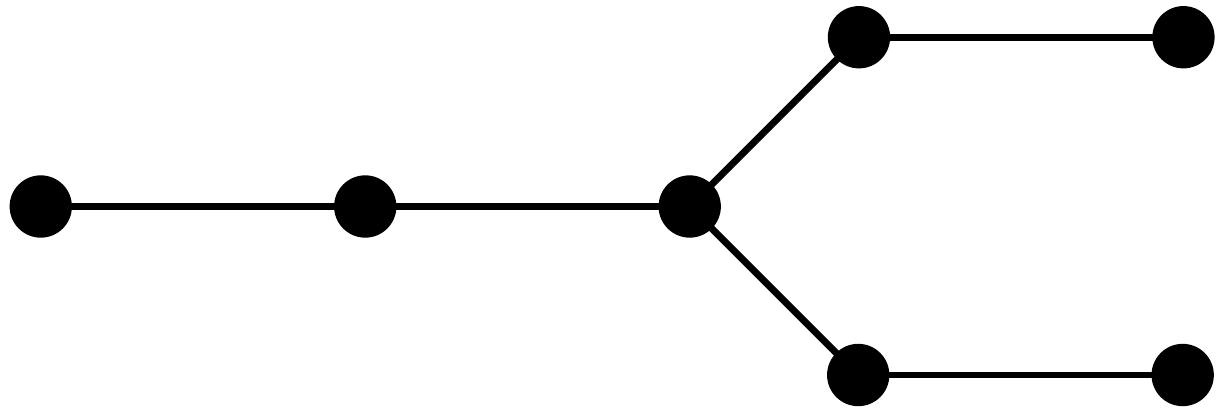}\\[1cm] \midrule
		8  &\,& \includegraphics[width=.6\linewidth]{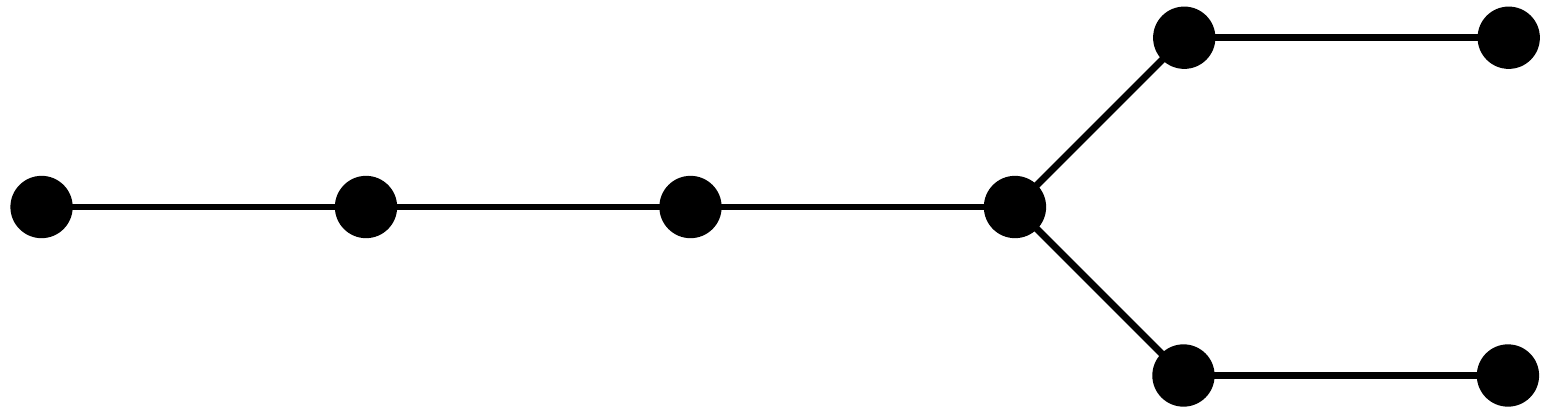}\\[1cm] \midrule
		9  &\,& \includegraphics[width=.4\linewidth]{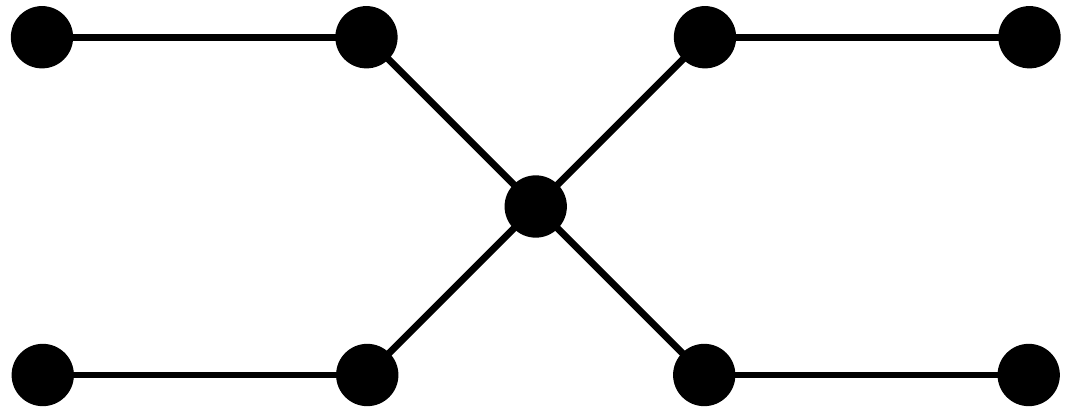}\\[1cm] \midrule
		10  &\,& \includegraphics[width=.6\linewidth]{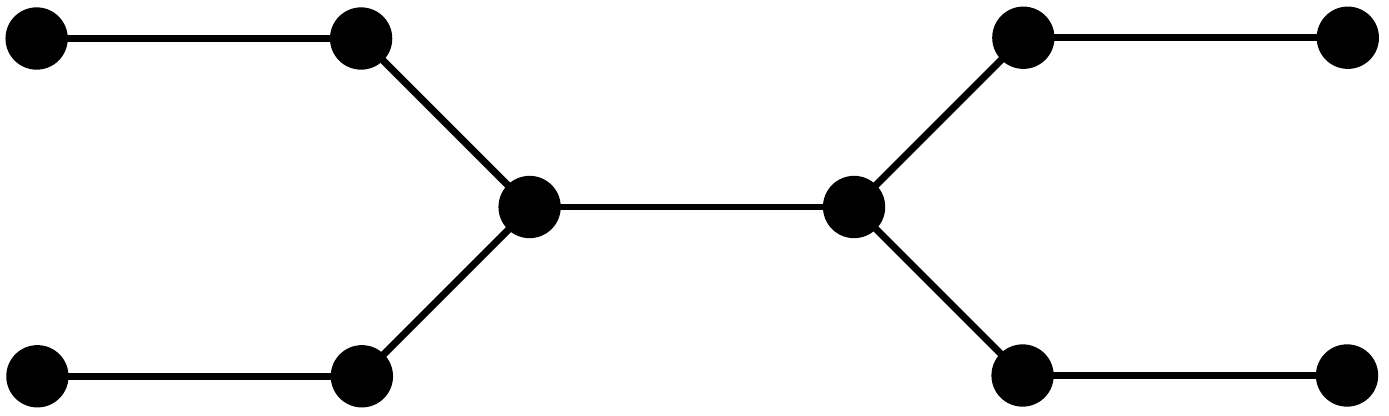}\\[1cm] \midrule
		11  &\,& \includegraphics[width=.6\linewidth]{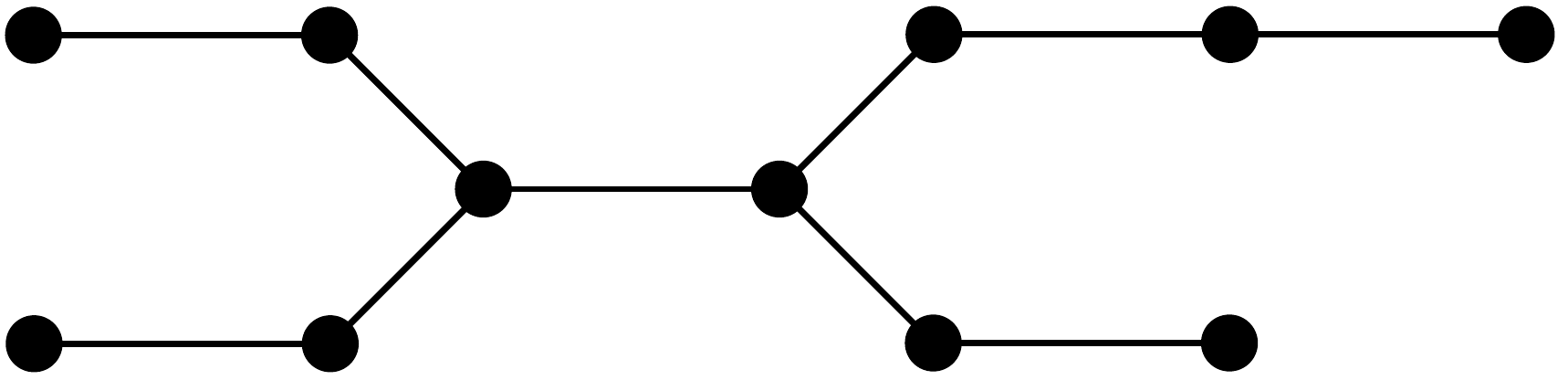}\\[1cm] \midrule
		12  &\,& \includegraphics[width=.6\linewidth]{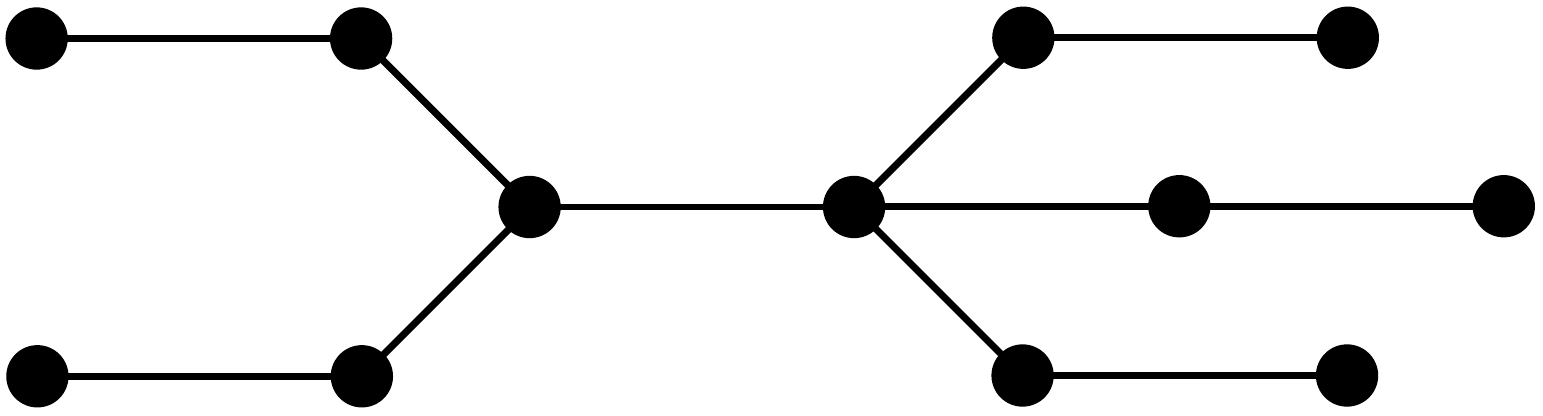}\\[1cm] \midrule
		13  &\,& \includegraphics[width=.6\linewidth]{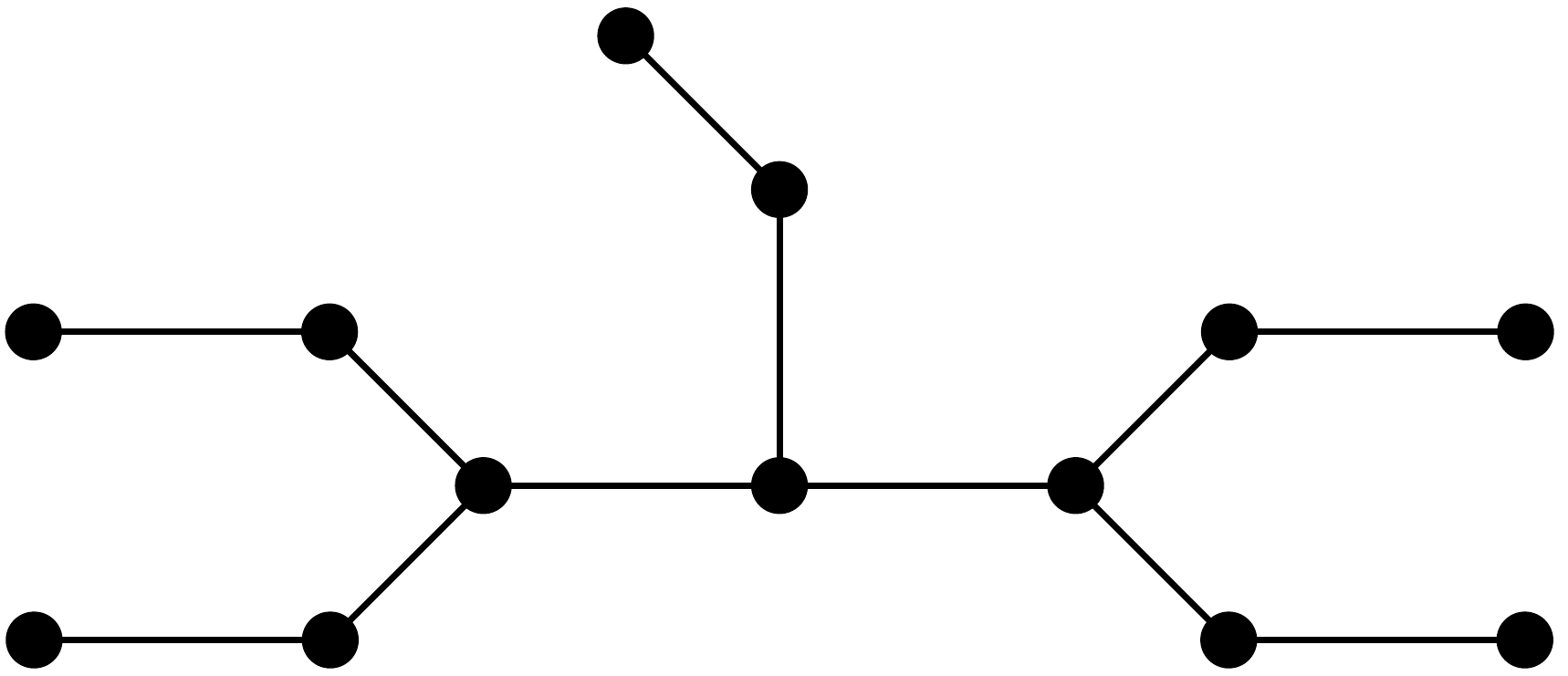}\\[1cm] \midrule
		14  &\,& \includegraphics[width=.6\linewidth]{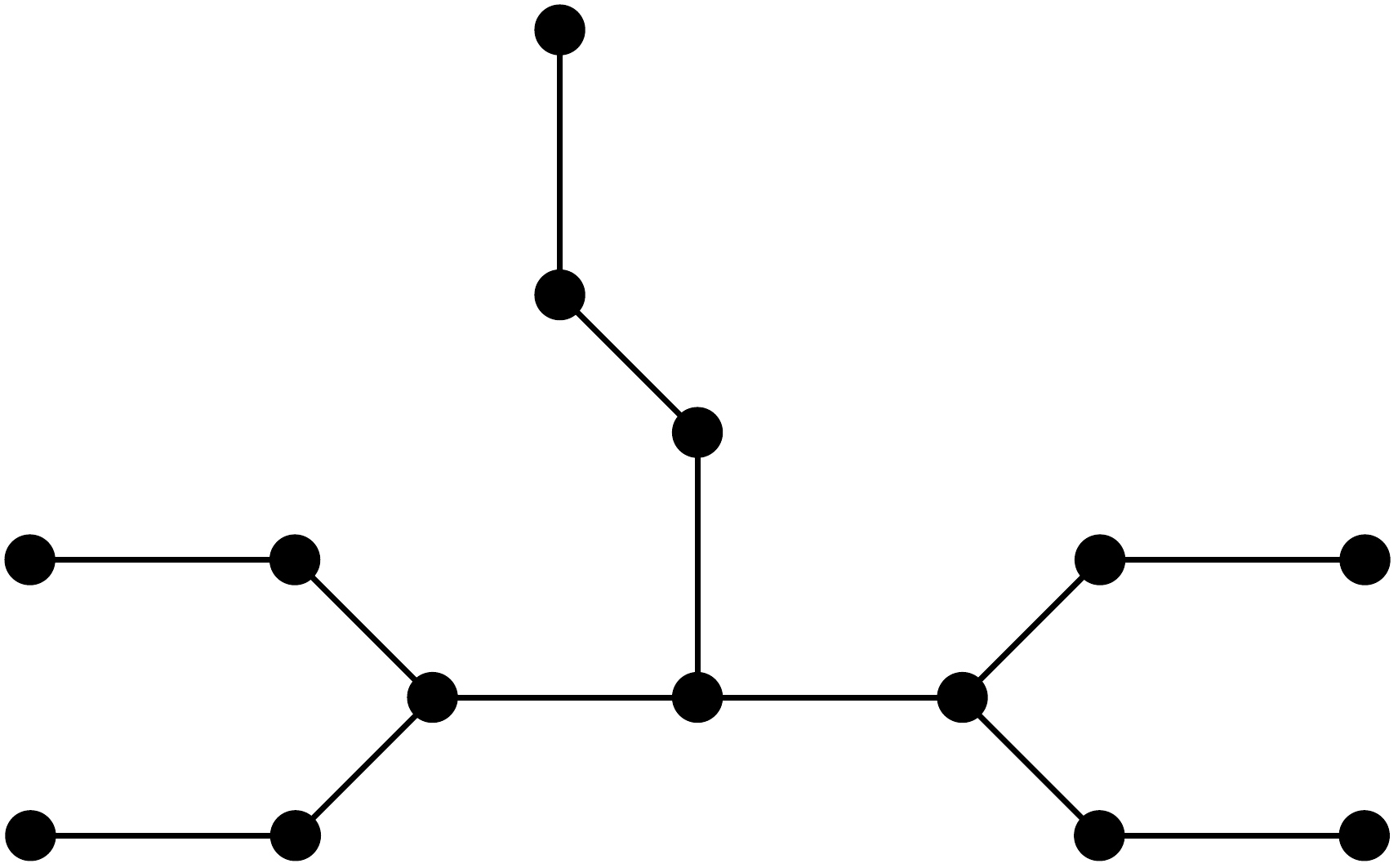}\\[1cm] \midrule
		15  &\,& \includegraphics[width=.6\linewidth]{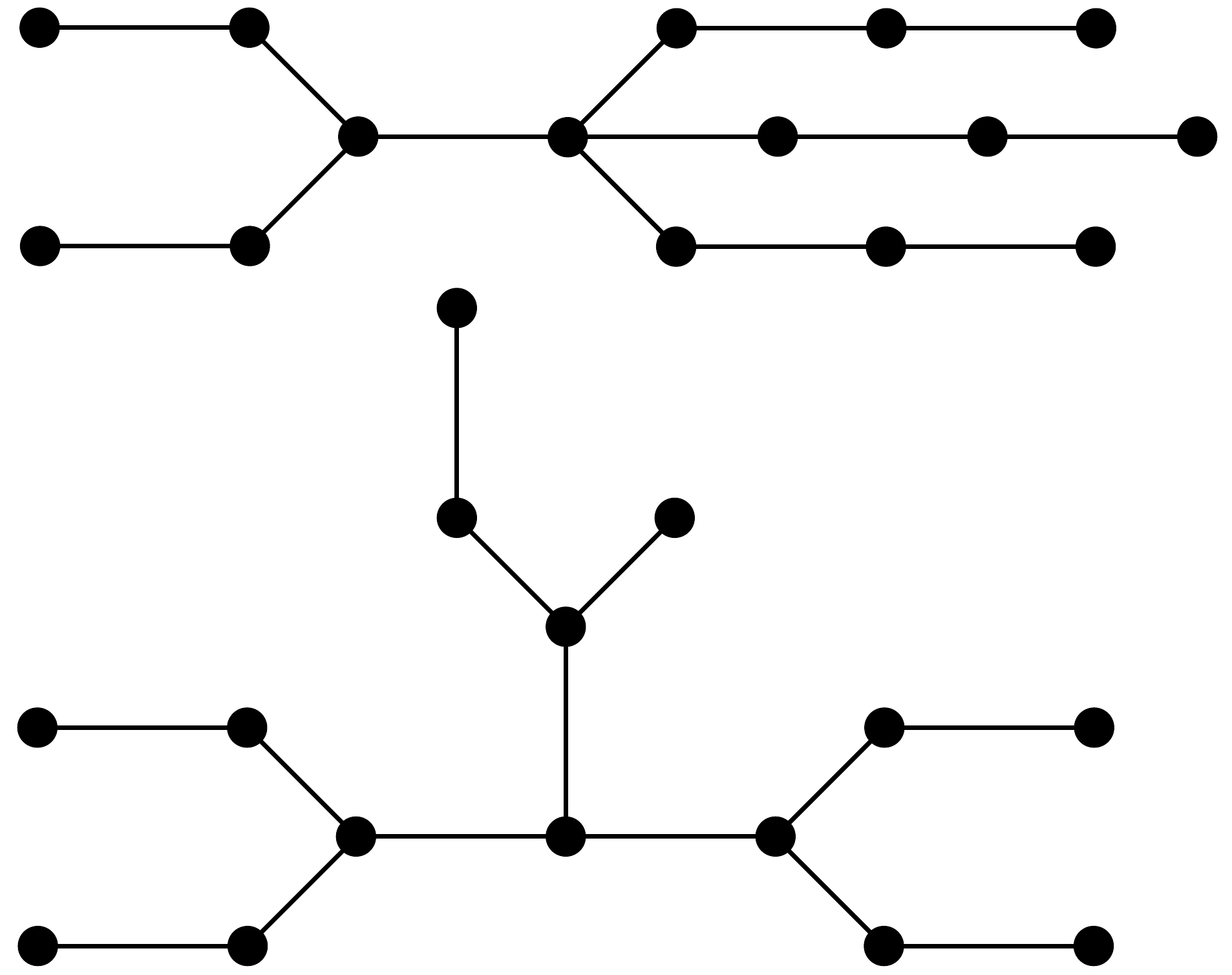}\\[1cm] \midrule
		16  &\,& \includegraphics[width=.6\linewidth]{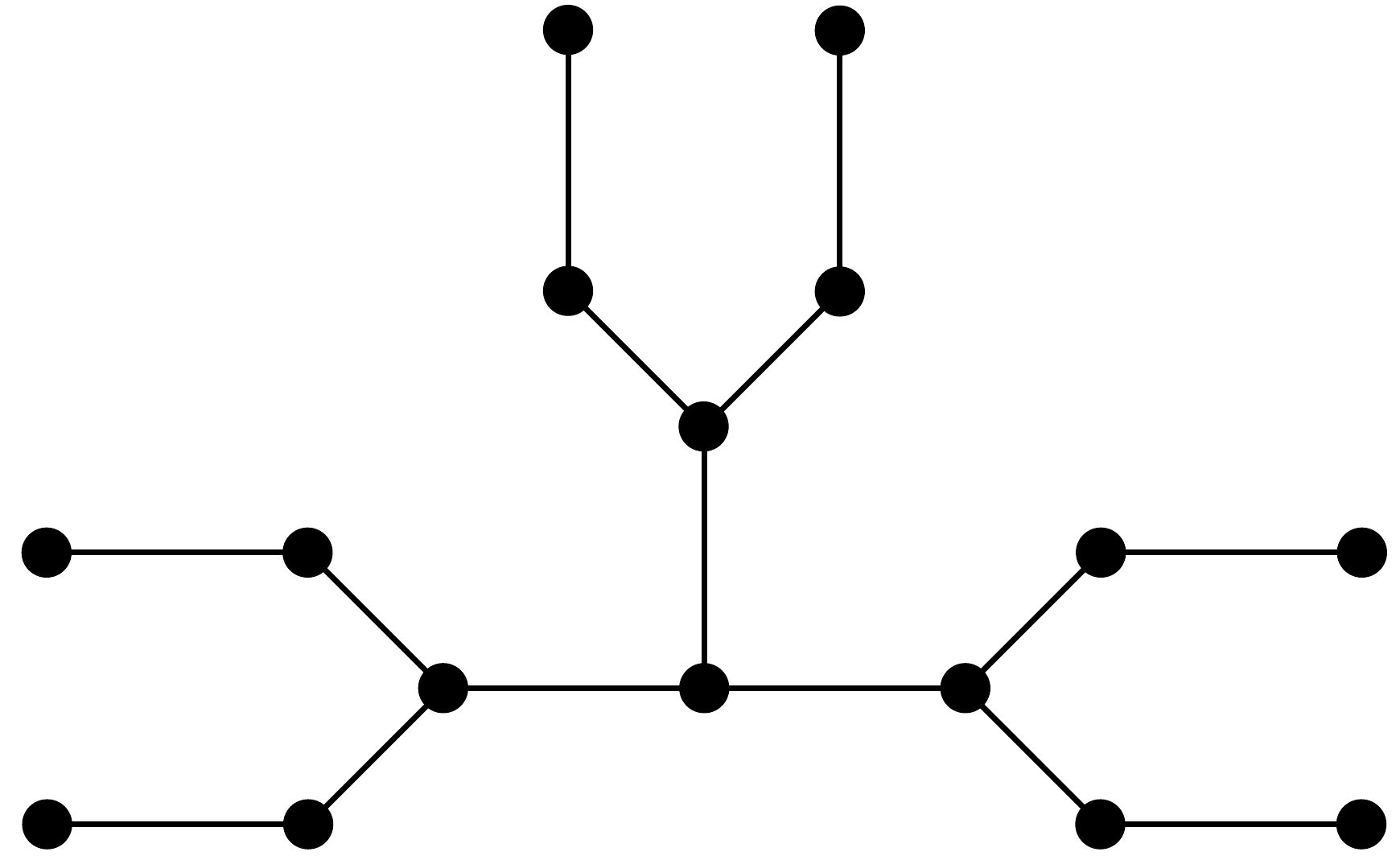}\\[1cm] \midrule
		17  &\,& \includegraphics[width=.6\linewidth]{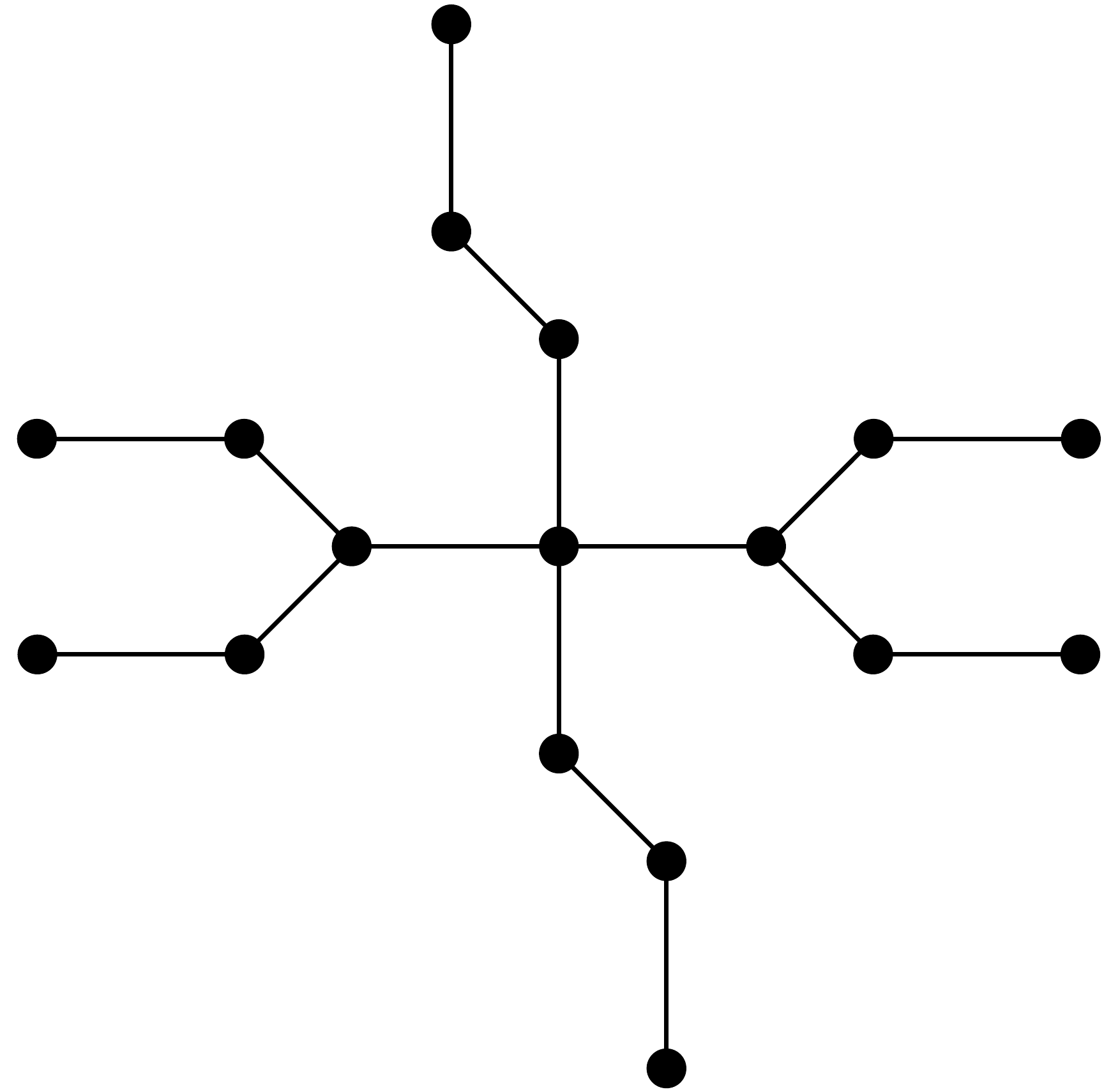}\\[1cm] \midrule
		18  &\,& \includegraphics[width=.95\linewidth]{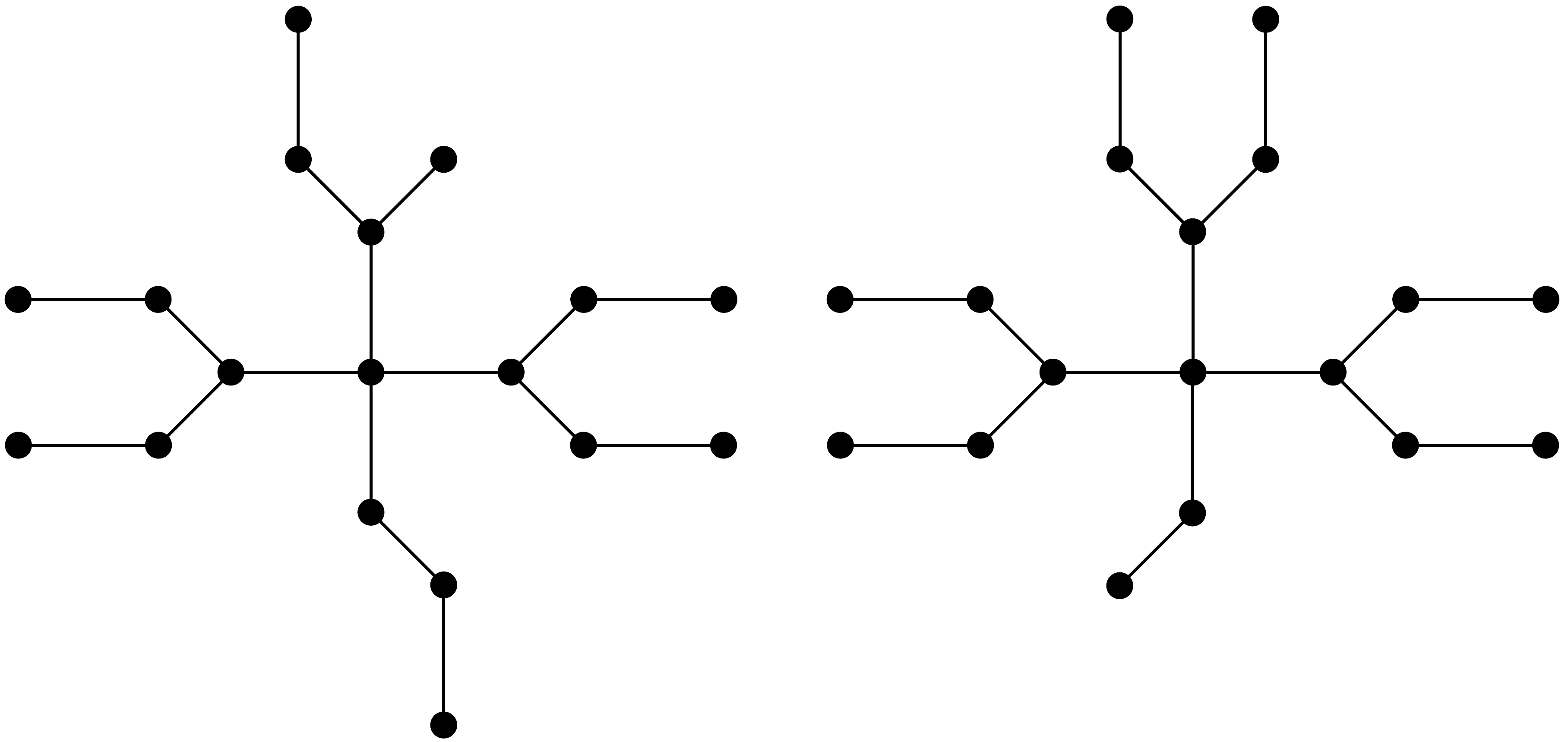}\\[1cm] \midrule
		19  &\,& \includegraphics[width=.6\linewidth]{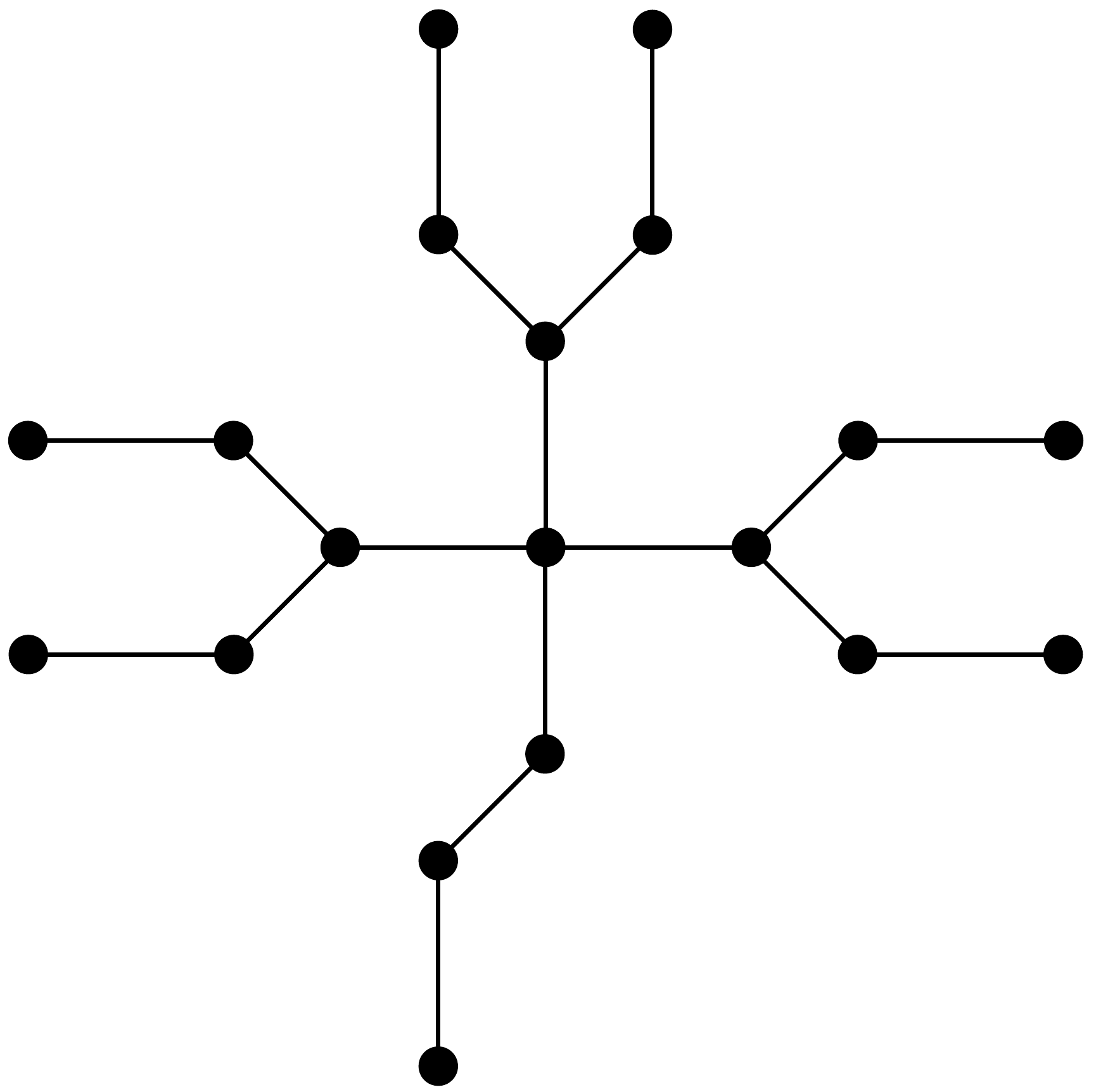}\\[1cm] \midrule
		20  &\,& \includegraphics[width=.6\linewidth]{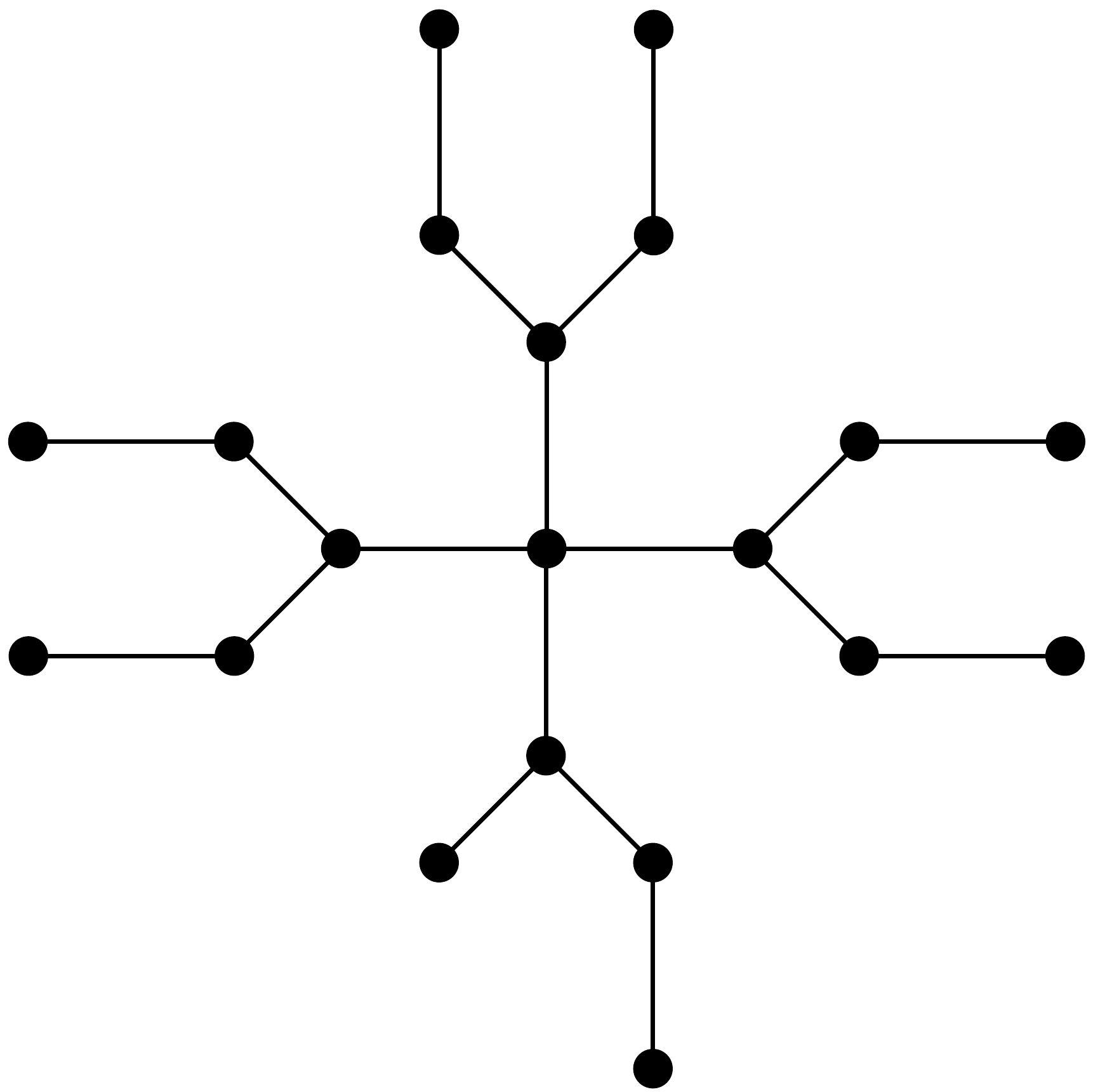}\\[1cm] \midrule
		21  &\,& \includegraphics[width=.6\linewidth]{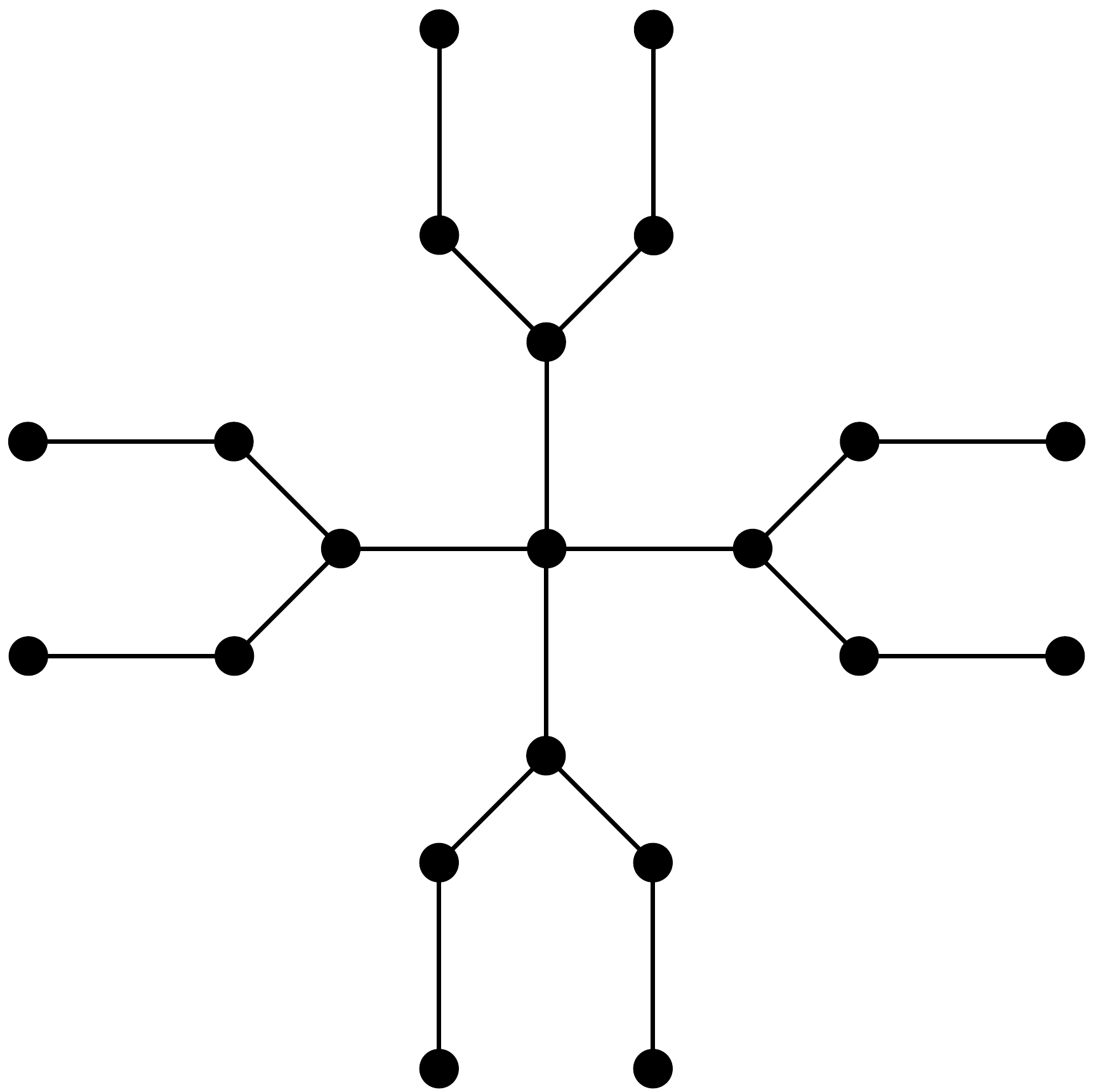}\\[1cm] \midrule
		22  &\,& \includegraphics[width=.6\linewidth]{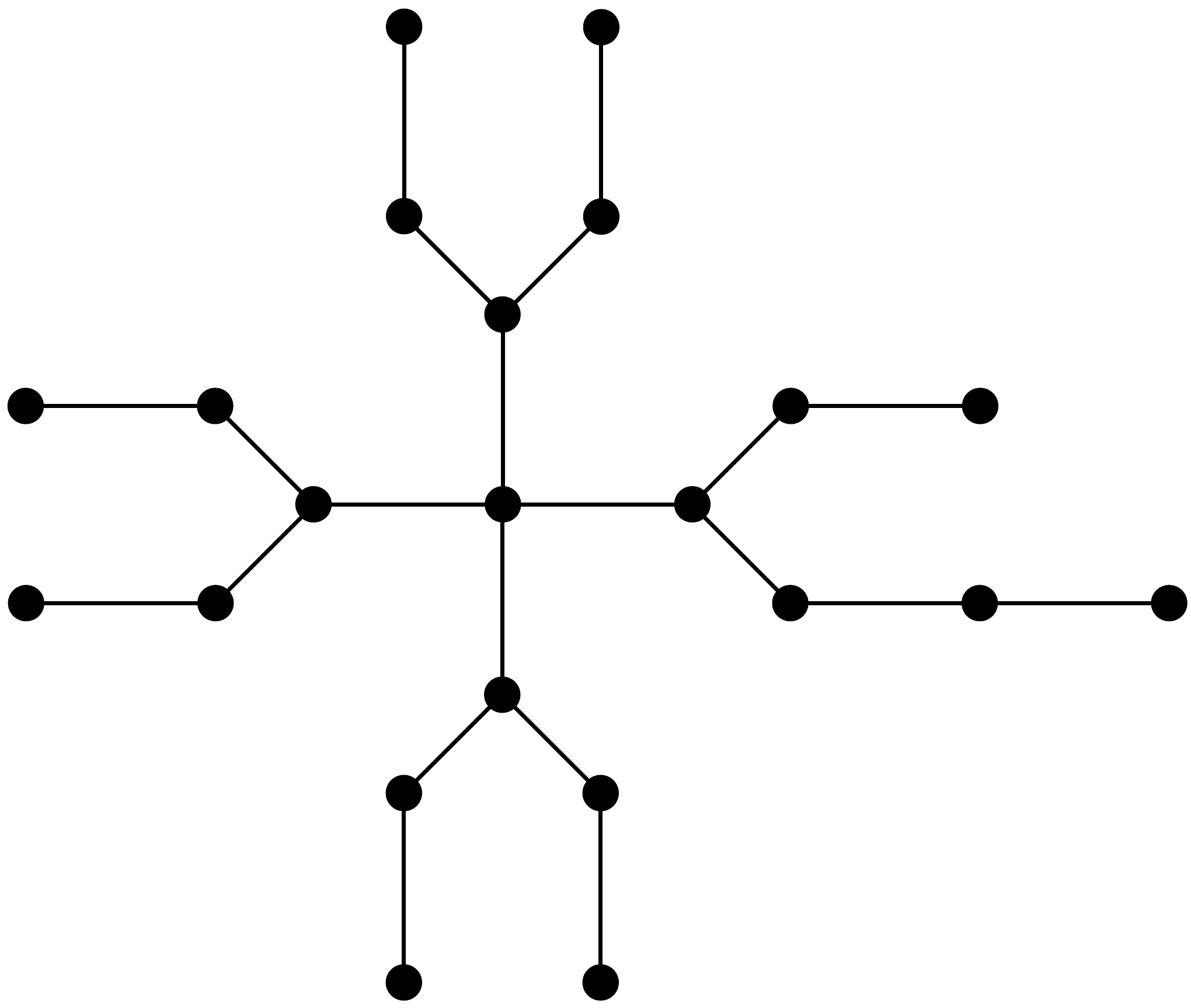}\\[1cm] \midrule
		23  &\,& \includegraphics[width=.6\linewidth]{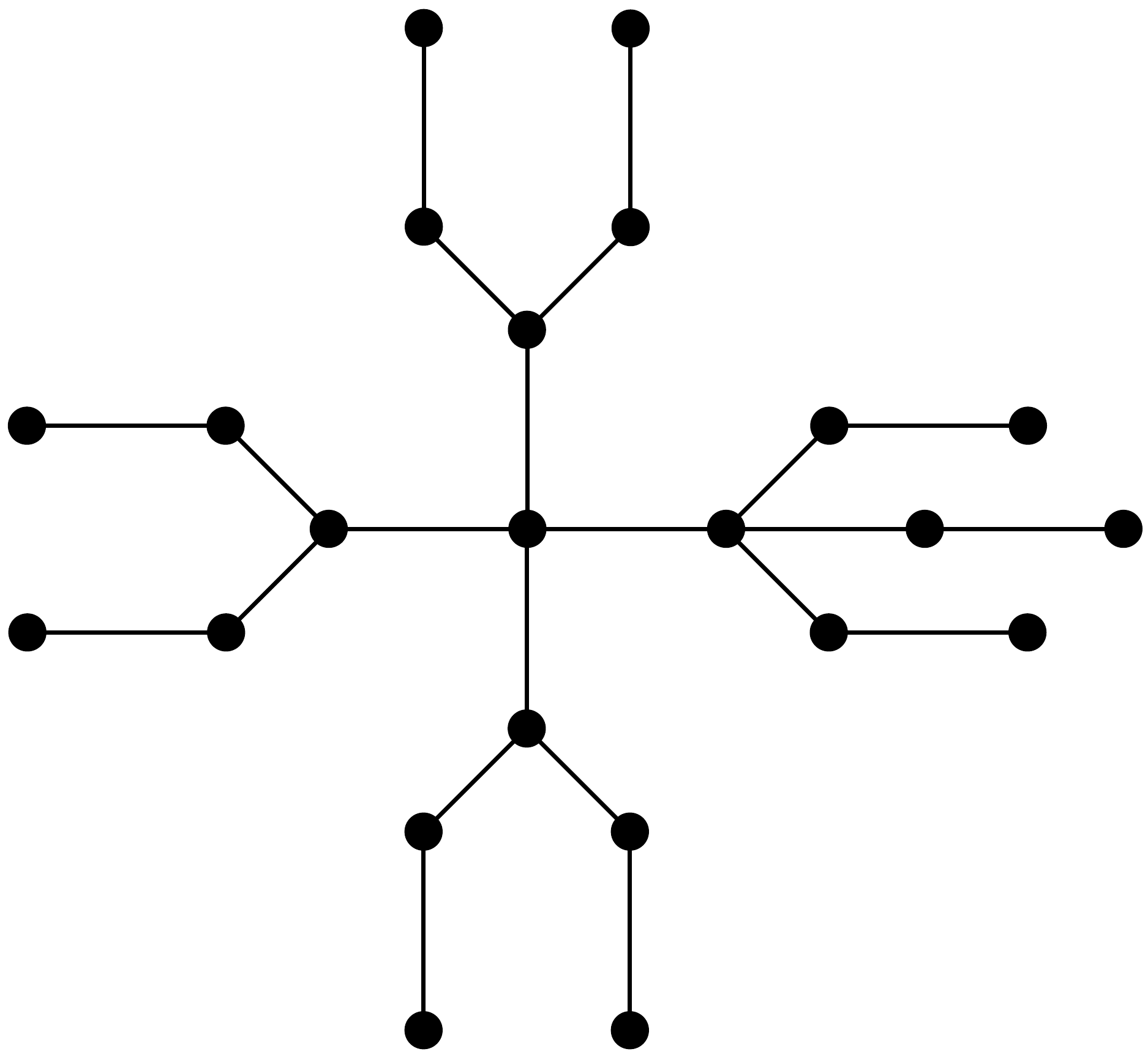}\\[1cm] \midrule
		24  &\,& \includegraphics[width=.6\linewidth]{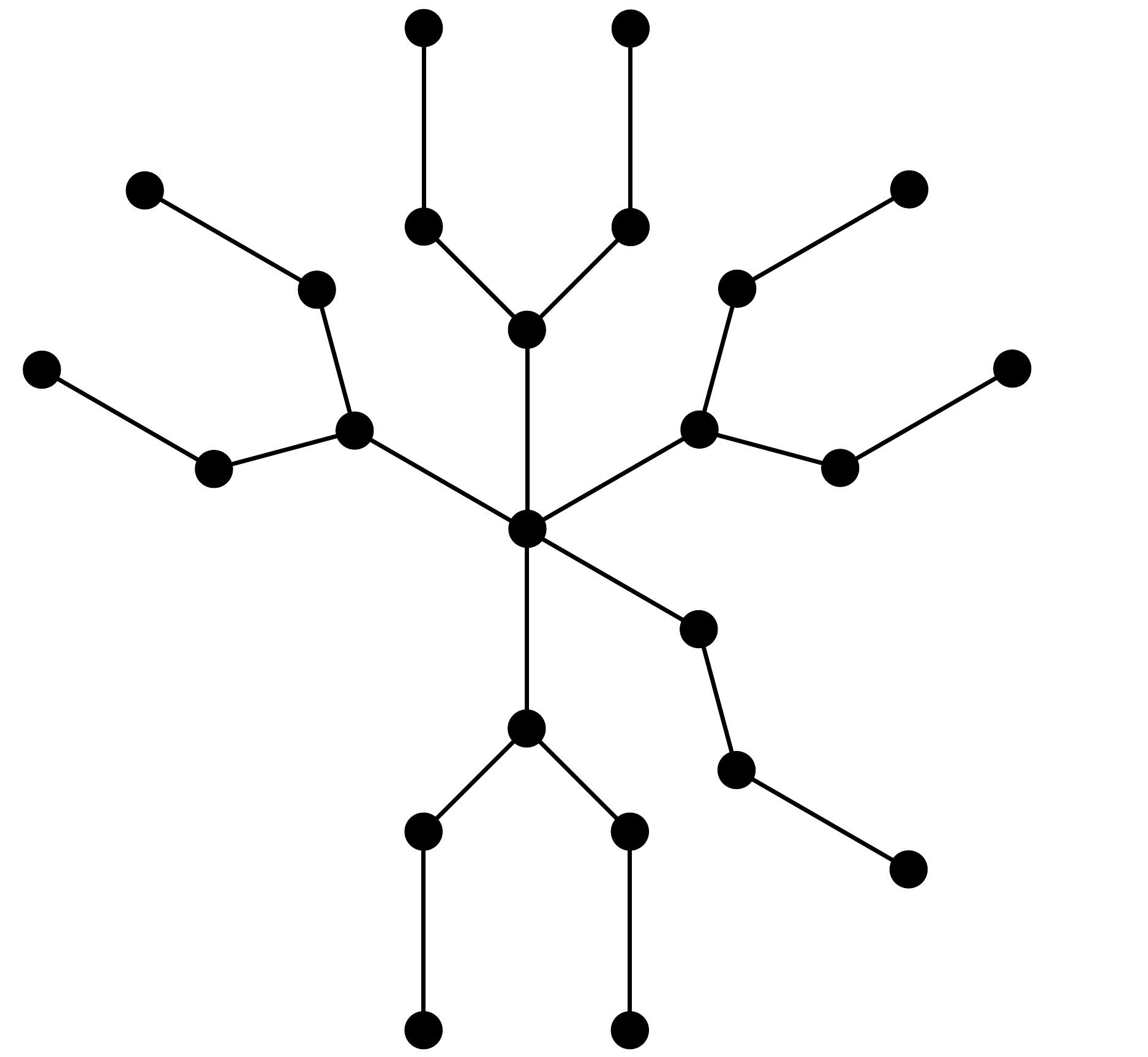}\\[1cm] \midrule
		25  &\,& \includegraphics[width=.6\linewidth]{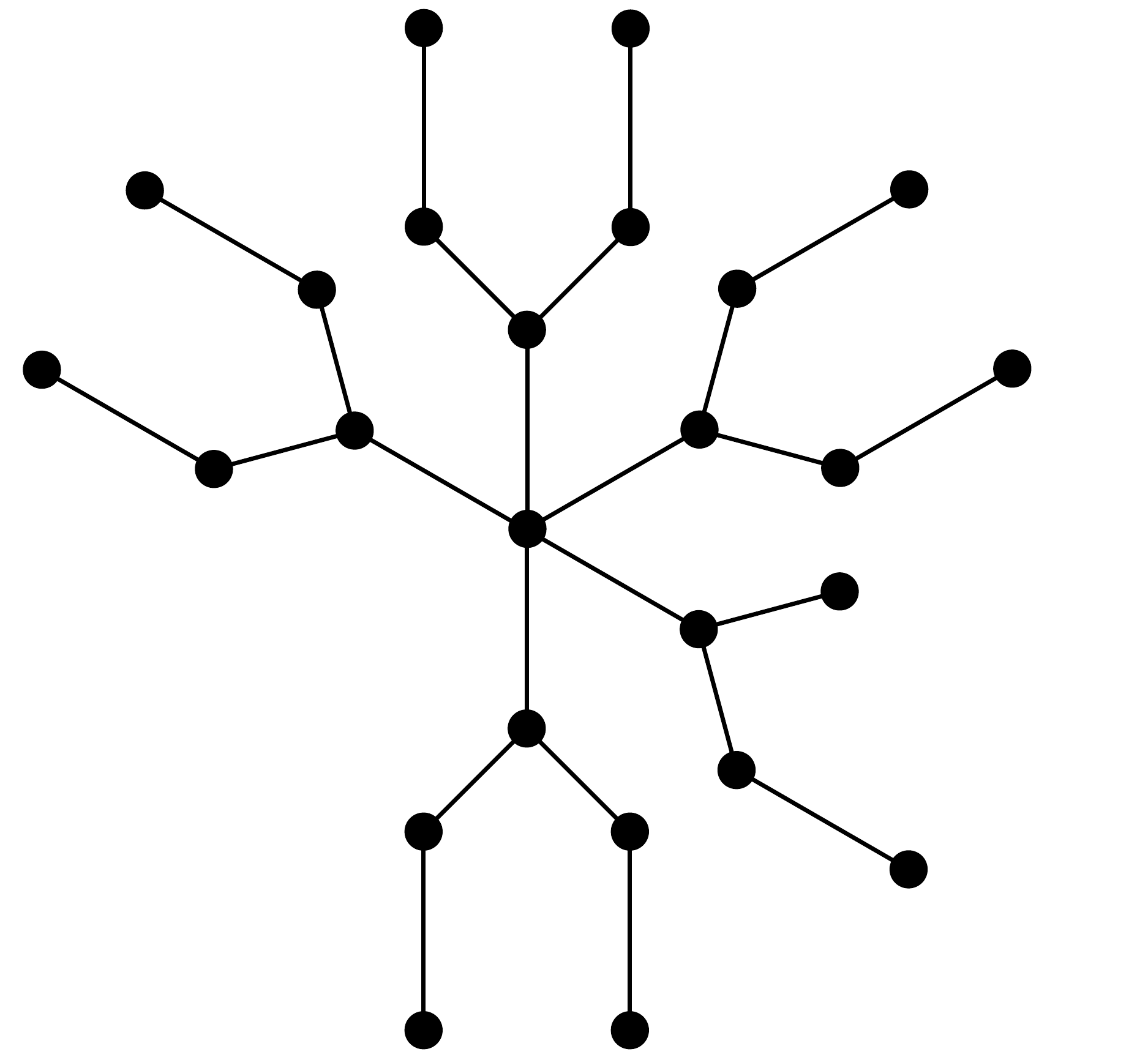}\\[1cm] \midrule
	\end{longtable}
\end{center}

It is peculiar that some of the trees, shown in the Table \ref{tab:trees},
coincide with the minimal trees according to the well-know atom--bond
connectivity index $ABC(T)$ \cite{GuFuIv12}. In particular, minimal trees
for the $\sz(T)$ and $ABC(T)$ for 7--13, 18, and 21 vertices are the same.
It should be noted that apart from these minimal trees there may be other
trees of the same order having either the minimal $ABC(T)$ or the minimal 
$\sz(T)$\,. Minimal trees, shown in the Table \ref{tab:trees}, mostly 
resemble to so-called Kragujevac trees introduced in \cite{HoAhGu14}.

The complete characterization of trees having minimal $\sz(T)$
is beyond our limits at the moment, but we noticed several regularities
from figures in the Table \ref{tab:trees} that will be outlined in the
next subsection.

\subsection{Some properties of trees having minimum weighted Szeged index}

Here, we denote by $T_{\min}$ a tree with the minimum possible weighted
Szeged index on $n$ vertices, and study its properties. Notice that for
some values of $n$, it may not be a uniquely defined tree. 

\vspace{3mm}

\begin{proposition} \label{prop:trans2}
	For $n > 3$, no vertex of degree at least 6 in $T_{\min}$ is adjacent to two leaves.
\end{proposition}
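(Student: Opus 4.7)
The plan is to argue by contradiction via a single local transformation. Suppose that $T_{\min}$ contains a vertex $v$ of degree $d\geq 6$ adjacent to two leaves $x_1$ and $x_2$; denote the remaining neighbors of $v$ by $y_1,\ldots,y_{d-2}$ and let $Y_i$ be the component of $T_{\min}-vy_i$ containing $y_i$, so $|Y_i|\geq 1$ and $\sum_{i=1}^{d-2}|Y_i|=n-3$. The move I will use is to detach $x_2$ from $v$ and re-attach it to $x_1$: delete the edge $vx_2$ and add the edge $x_1x_2$, producing a tree $T'$ in which $\deg(v)=d-1$, $\deg(x_1)=2$, and $x_2$ is still a leaf. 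The goal is to show $\sz(T')<\sz(T_{\min})$, contradicting minimality.

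The next step is to compute $\Delta:=\sz(T')-\sz(T_{\min})$ edge by edge. Only the edges incident to $v$, $x_1$, $x_2$ are affected; every other edge retains the same weight factor and the same $n_u n_v$ on both sides. A routine bookkeeping of three types of contributions, namely the altered edge $vx_1$, the replacement of $vx_2$ by the new edge $x_1x_2$, and the uniform drop by one in the weight factor of every edge $vy_i$, will yield
\[
\Delta = 3n-2d-5 - \sum_{i=1}^{d-2}|Y_i|\,(n-|Y_i|).
\]

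The heart of the argument is then to show that this quantity is negative. Since $f(x)=x(n-x)$ is concave and each $|Y_i|\geq 1$, the sum $\sum|Y_i|(n-|Y_i|)$ is minimized at an extreme point of the simplex $\{\,|Y_i|\geq 1,\ \sum|Y_i|=n-3\,\}$: the minimum is attained when $d-3$ branches have size $1$ and a single branch has size $n-d$. This is precisely the extremal fact already exploited in the proof of Theorem~\ref{thm:maximum} (the inequality $(x+1)^2+(y-1)^2>x^2+y^2$ applied to $\sum|Y_i|^2=n(n-3)-\sum|Y_i|(n-|Y_i|)$). Substituting the resulting lower bound $(d-3)(n-1)+d(n-d)$ into the formula for $\Delta$ reduces the required inequality $\Delta<0$ to the polynomial inequality $2n(d-3)>d^2-d-8$.

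The final step is to verify this polynomial inequality for all admissible $d\geq 6$ and $n\geq d+1$. Using the forced bound $n\geq d+1$, it collapses to $(d-1)(d-2)>0$, which is trivial. I expect the principal obstacle to be the careful edge-wise bookkeeping that produces the formula for $\Delta$ (especially isolating the cancellations that make the expression so clean) rather than the concluding polynomial check, which is immediate once the extremal configuration of the $|Y_i|$ is in hand. The degenerate case in which every $y_i$ is itself a leaf is handled automatically by the same computation, and is anyway ruled out by Theorem~\ref{thm:maximum}, since then $T_{\min}=S_n$, which is excluded for $n>3$.
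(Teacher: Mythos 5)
Your proof is correct and takes essentially the same approach as the paper: your move of detaching $x_2$ and reattaching it to $x_1$ produces exactly the tree the paper constructs by deleting both leaves and appending a path of length two at $v$, your formula for $\Delta$ is the negative of the paper's, and the concavity/extreme-point bound on $\sum |Y_i|(n-|Y_i|)$ is the same device. The only (immaterial) difference is the final numerical check, where you use $n \ge d+1$ to reduce to $(d-1)(d-2)>0$ while the paper uses $dn \ge 6n$ and $dn \ge d^2$.
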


\begin{figure}[H]
\centering
\includegraphics[width=0.4\linewidth]{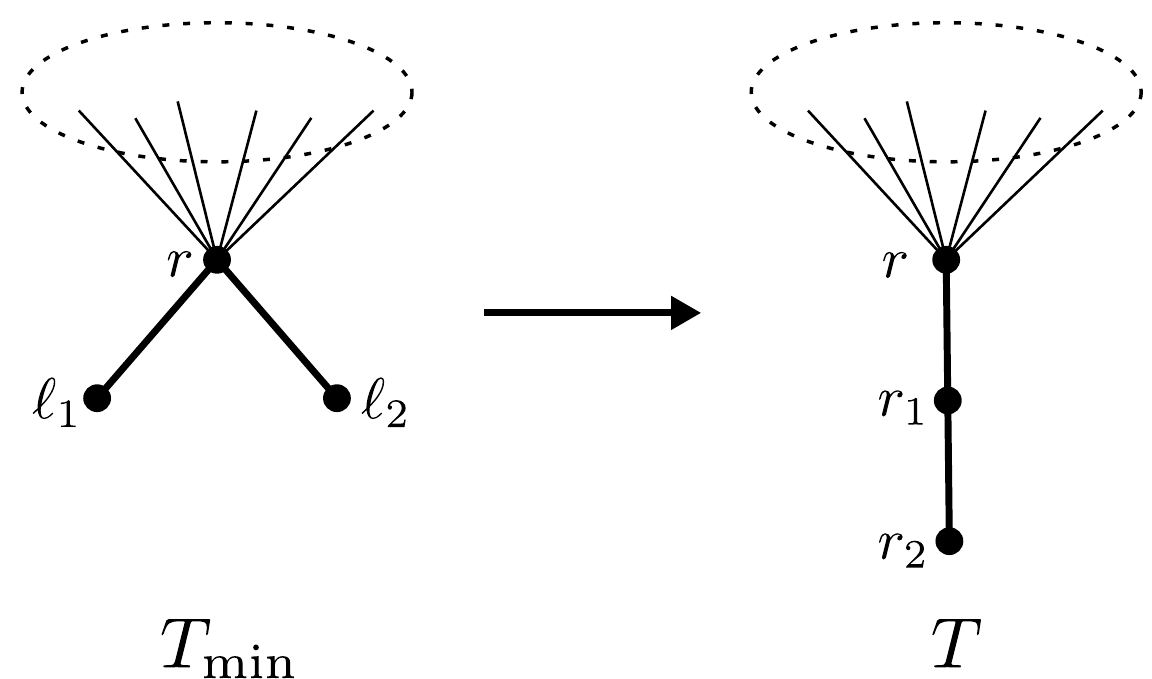}
\caption{An illustration of construction in Proposition \ref{prop:trans2}.}
\label{fig:trans2}
\end{figure}

\begin{proof}
	Suppose that a vertex $r$ of $T_{\min}$ is a vertex of degree $d$ adjacent
	to two leaves $\ell_1$ and $\ell_2$. Denote the vertices 
  $N(v)\setminus\{\ell_1,\ell_2\}$ as $x_1, x_2, \ldots, x_{d-2}$ and the set
  of vertices of the components of $T_{\min}-vx_i$ containing $x_i$ as $X_i$
  for all $i \in \{1, 2, \ldots, d-2\}$.

	Define a new graph $T$ as
	\begin{align*} 
    V(T) &= V(T_{\min}) - \ell_1 - \ell_2 + r_1 + r_2,\\[3mm]
    E(T) &= E(T_{\min}) - r\ell_1 - r\ell_2 + rr_1 + r_1r_2.
	\end{align*}

	It is easy to see that $\deg_{T_{\min}}(r) = \deg_{T}(r) + 1$ and
	$\sum_{i=1}^{d-2}|X_i| = n - 3$. Also, as in the proof of Theorem \ref{thm:maximum},
  the maximum possible value of $\sum_{i=1}^{d-2}|X_i|^2$ is $(d-3) + (n-d)^2$\,.

	The difference $\Delta = \sz(T_{\min}) - \sz(T)$ can then be written as
	\begin{align*} 
    \Delta &= \left(\sum_{i=1}^{d-2}|X_i|\cdot (n-|X_i|)\right) + 2(d+1)(n-1)
           - (d+1)\cdot 2\cdot (n-2) - 3(n-1)\\[3mm]
           &\geq \left(\sum_{i=1}^{d-2}|X_i|\cdot (n-|X_i|)\right) +2d -3n + 5
           = n\cdot (n-3) - \sum_{i=1}^{d-2}|X_i|^2 + 2d -3n + 5\\[3mm]
           &\geq n\cdot (n-3) - (n-d)^2 - (d-3) + 2d -3n + 5
           = 2dn -6n - d^2 + d + 8\ .
	\end{align*}

  Since $d \geq 6$, it holds $dn \geq 6n$. Also, $dn \geq d^2$ and thus we conclude that $\Delta > 0$. This implies that $T$ has smaller weighted Szeged index than $T_{\min}$,
  which is a contradiction. 
\end{proof}

\vspace{3mm}

\begin{proposition} \label{prop:trans1}
	No vertex of degree at least 10 in $T_{\min}$ is simultaneously incident with two 2-rays and a leaf.
\end{proposition}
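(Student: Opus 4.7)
The plan is to adapt the transformation strategy used in Theorem \ref{thm:maximum} and Proposition \ref{prop:trans2}. Assume for contradiction that some vertex $r\in V(T_{\min})$ has degree $d\ge 10$ and is incident to a leaf $\ell$ and to two 2-rays $r{-}p_1{-}q_1$ and $r{-}p_2{-}q_2$ (with $q_1,q_2$ leaves). Denote by $x_1,\dots,x_{d-3}$ the remaining neighbours of $r$, and, as in Proposition \ref{prop:trans2}, let $X_i$ be the vertex set of the component of $T_{\min}-rx_i$ containing $x_i$; then $\sum_{i=1}^{d-3}|X_i|=n-6$, which in particular forces $n\ge d+3$.

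The transformation I would use moves the leaf $\ell$ from $r$ to the tip of one of the 2-rays. Formally, let $T$ be the tree obtained from $T_{\min}$ by deleting the edge $r\ell$ and adding the edge $q_1\ell$, so that $r{-}p_1{-}q_1{-}\ell$ becomes a 3-ray in $T$; the only degrees that change are $\deg_T(r)=d-1$ and $\deg_T(q_1)=2$. Set $\Delta:=\sz(T_{\min})-\sz(T)$; the goal is $\Delta>0$, which will contradict the minimality of $T_{\min}$.

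I would compute $\Delta$ by summing edge-by-edge differences, grouped as follows. Edges inside the sets $X_i$, together with $p_2q_2$, contribute nothing. For the $d-2$ edges $rx_1,\dots,rx_{d-3},rp_2$ only the $r$-endpoint degree drops by $1$, contributing $\sum_i|X_i|(n-|X_i|)+2(n-2)$. The swapped edge $r\ell\leftrightarrow q_1\ell$ contributes $(d+1)(n-1)-3(n-1)=(d-2)(n-1)$. The delicate pair is $rp_1$ and $p_1q_1$: both the degree sum and the $n$-values change, and expanding their four contributions directly gives $-dn+5d+n+1$ and $-5n+13$ respectively. After collecting terms, the $dn$ contributions cancel and one obtains
\[
\Delta \;=\; 4d-4n+12+\sum_{i=1}^{d-3}|X_i|(n-|X_i|).
\]

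To finish, I would lower-bound the remaining sum exactly as in the earlier proofs: since $|X_i|\ge 1$ and $\sum|X_i|=n-6$, the quantity $\sum|X_i|^2$ is maximised when one $|X_i|$ equals $n-d-2$ and the others equal $1$, giving $\sum|X_i|(n-|X_i|)\ge 2(d-1)n-d^2-5d$. Plugging this in and using $n\ge d+3$ yields
\[
\Delta \;\ge\; 2nd-6n-d^2-d+12 \;\ge\; d^2-d-6 \;>\;0
\]
for all $d\ge 10$, the desired contradiction. The main obstacle is the bookkeeping for the two edges $rp_1$ and $p_1q_1$, where the degree sum and the $n$-values shift simultaneously; a sign slip in either would ruin the final inequality, so I would expand those contributions carefully and confirm that the $dn$-terms cancel as claimed.
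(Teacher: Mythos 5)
Your proof is correct, and I verified the arithmetic: the edge-by-edge differences for $rp_1$ and $p_1q_1$ are indeed $-dn+5d+n+1$ and $-5n+13$, the $dn$-terms cancel, and the final bound $\Delta\ge 2dn-6n-d^2-d+12\ge d^2-d-6>0$ holds (in fact already for $d\ge 4$, so your argument proves something stronger than the stated $d\ge 10$). However, your transformation is genuinely different from the paper's. The paper deletes all five vertices of the configuration (the two 2-rays $r_1r_2$, $r_3r_4$ and the leaf $\ell$) and reattaches them as a single subtree: a new neighbour $y_1$ of $r$ carrying two pendant 2-rays, so that $\deg(r)$ drops by $2$ and the five displaced vertices form a spider of depth $3$. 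You instead perform the minimal surgery of sliding the leaf $\ell$ onto the tip $q_1$ of one 2-ray, turning it into a 3-ray and dropping $\deg(r)$ by only $1$. Your version has two advantages: the bookkeeping is confined to just three edges ($r\ell\leftrightarrow q_1\ell$, $rp_1$, $p_1q_1$) plus the uniform degree-drop terms, and the resulting inequality is cleaner and valid for a wider range of $d$; the paper's version makes a bigger structural change in one step (which is more in the spirit of pushing $T_{\min}$ towards the Kragujevac-like shapes observed in Table \ref{tab:trees}) but at the cost of a messier $\Delta$ computation. Both yield the required contradiction with the minimality of $T_{\min}$, and since the proposition only asserts non-occurrence of the configuration, it is irrelevant that your target tree $T$ contains a 3-ray.
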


\begin{figure}[H]
\centering
\includegraphics[width=0.4\linewidth]{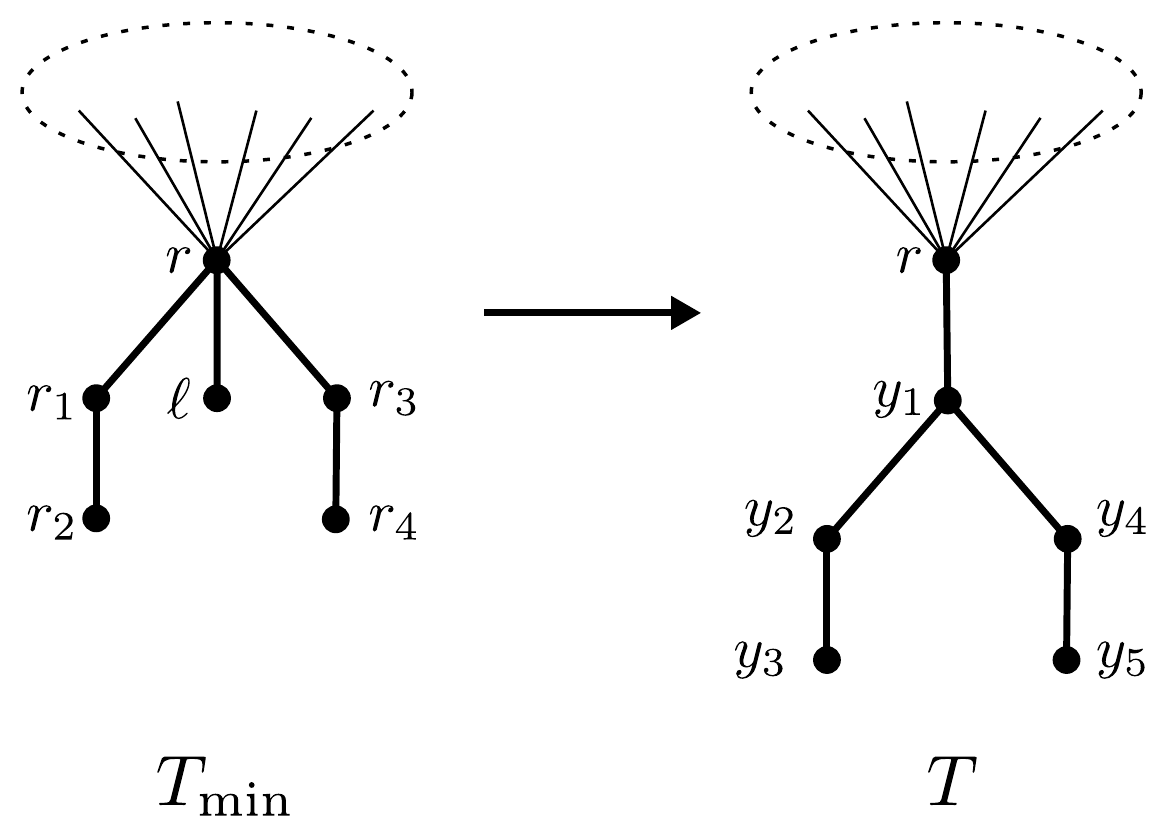}
\caption{An illustration of construction in Proposition \ref{prop:trans1}.}
\label{fig:trans1}
\end{figure}

\begin{proof}
	Suppose contradictory that $T_{\min}$ has a vertex $r$ of degree $d$ incident
  with two 2-rays and a leaf. Denote the vertices $N(v)\setminus\{r_1,r_3,\ell\}$
  as $x_1, x_2, \ldots, x_{d-3}$ and the set of vertices of the components of
  $T_{\min}-rx_i$ containing $x_i$ as $X_i$ for all $i \in \{1, 2, \ldots, d-3\}$\,.

	Define a new graph $T$ as
	\begin{align*} 
    V(T) &= V(T_{\min}) - \{r_1,r_2,r_3,r_4,\ell\} \cup \{y_1, y_2, \ldots, y_5\},\\[3mm]
    E(T) &= \big(E(T_{\min}) \cap E[V(T)]\big) \cup \{ry_1,y_1y_2,y_2y_3,
    y_1y_4,y_4y_5 \}.
	\end{align*}

  Observe that the addition of the edges $r_1r_2$ and $r_3r_4$ to $\sz(T)$ is the
  same as the addition of the edges $y_2y_3$ and $y_4y_5$ to $\sz(T_{\min})$. Also,
  the role of $rr_1$ and $rr_3$ is symmetric and the same holds for $y_1y_2$ and
  $y_1y_4$.

  We now make a few observations, similar to those in Proposition \ref{prop:trans2}.
  We see that $\deg_{T_{\min}}(r) = \deg_{T}(r) + 2$. 
  Furthermore $\sum_{i=1}^{d-3}|X_i| = n - 6$. Also, by the same argument
	as in the proof of Theorem \ref{thm:maximum}, the maximum value of 
  $\sum_{i=1}^{d-3}|X_i|^2$ is $(n-d-2)^2 + d - 4$.

  We claim that $\Delta = \sz(T_{\min}) - \sz(T) > 0$. We can write
  \begin{align*} 
    \Delta &= 2\left(\sum_{i=1}^{d-2}|X_i|\cdot (n-|X_i|)\right) 
           + 2 (2+d)(n-2)2 + (d+1)(n-1) - (d+1)(n-5)5\\[3mm] 
           &- 20(n-2) = 2\sum_{i=1}^{d-3}|X_i| 
           - 2\sum_{i=1}^{d-3}|X_i|^2 + 2 (2+d)(n-2)2 + (d+1)(n-1)\\[3mm]
           &- (d+1)(n-5)5 - 20(n-2) \geq -2d^2 + 4dn + 6d - 20n + 48\ .
  \end{align*}

  Since $d \ge 10$ by our assumption, it holds that $2dn \geq 20n$ and also
  $2dn \geq 2d^2$. Thus, $\Delta > 0$ and we conclude that $T$ has smaller weighted
  Szeged index than $T_{\min}$, which is a contradiction.
\end{proof}

\vspace{3mm}

\begin{proposition} \label{prop:trans3}
	No vertex in $T_{\min}$ is incident to a 4-ray.
\end{proposition}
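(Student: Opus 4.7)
The plan is to mirror the contradiction-by-local-transformation strategy used in Propositions~\ref{prop:trans2} and \ref{prop:trans1}. Suppose for contradiction that $T_{\min}$ contains a vertex $r$ incident to a 4-ray $r\,r_1\,r_2\,r_3\,r_4$ (so that $r_1,r_2,r_3$ have degree $2$ and $r_4$ is a leaf). I would then produce a tree $T$ on the same vertex set with strictly smaller weighted Szeged index, contradicting the minimality of $T_{\min}$.

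The transformation I would use is a single \emph{edge slide}: delete the edge $r_2 r_3$ and insert the edge $r_1 r_3$. In $T$ the vertex $r_1$ becomes a degree-$3$ branching point carrying a pendant leaf $r_2$ and a 2-ray $r_1\,r_3\,r_4$; no other vertex changes its degree, and the subtree lying on the far side of $rr_1$ is preserved intact. The crucial observation is that every edge of $T$ disjoint from $\{rr_1,\,r_1r_2,\,r_1r_3,\,r_3r_4\}$ contributes the same to $\sz(T)$ as the corresponding edge of $T_{\min}$ contributes to $\sz(T_{\min})$, so
\[
\Delta \;:=\; \sz(T_{\min}) - \sz(T)
\]
reduces to a difference involving just five edges (the edge $rr_1$ together with the four 4-ray edges in $T_{\min}$, versus the four replacement edges in $T$).

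For these five edges the partitions are immediate: the sets $\{r_1,r_2,r_3,r_4\},\{r_2,r_3,r_4\},\{r_3,r_4\},\{r_4\}$ split off contiguously along the 4-ray and have sizes $4,3,2,1$. The degree factors $\deg(u)+\deg(v)$ change in a controlled way because only $\deg(r_1)$ (from $2$ to $3$) and $\deg(r_2)$ (from $2$ to $1$) shift. A direct telescoping should then yield a closed-form $\Delta$ linear in $n$, independent of $\deg(r)$; I expect it to come out to $\Delta = 2n - 12$, which is strictly positive for $n \ge 7$ and delivers the required contradiction.

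The main point I would flag---more a caveat than a genuine obstacle---is that a bound like $n \ge 7$ really is needed: for $n=5$ the minimiser is simply $P_5$, which of course carries a 4-ray, and for $n=6$ the path $P_6$ attains the minimum value of $\sz$ (tied with the caterpillar). So a threshold of this form must implicitly be in force, consistent with the explicit lower bounds on $n$ and $\deg(r)$ appearing in Propositions~\ref{prop:trans2} and \ref{prop:trans1}. Once that is acknowledged, everything else in the argument is a short mechanical computation, with no combinatorial case analysis required.
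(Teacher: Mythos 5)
Your proof is correct and essentially identical to the paper's: your edge slide produces a tree isomorphic to the one the paper constructs (a degree-$3$ vertex $r_1$ carrying a pendant leaf and a pendant 2-ray, all else unchanged), the same five-edge bookkeeping applies, and the difference does come out to $\Delta = 2n-12$, independent of $\deg(r)$. The paper likewise works only for $n\ge 7$ and disposes of $n\le 6$ by computer search, so the small-$n$ caveat you flag is exactly the hypothesis in force there.
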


\begin{figure}[H]
\centering
\includegraphics[width=0.4\linewidth]{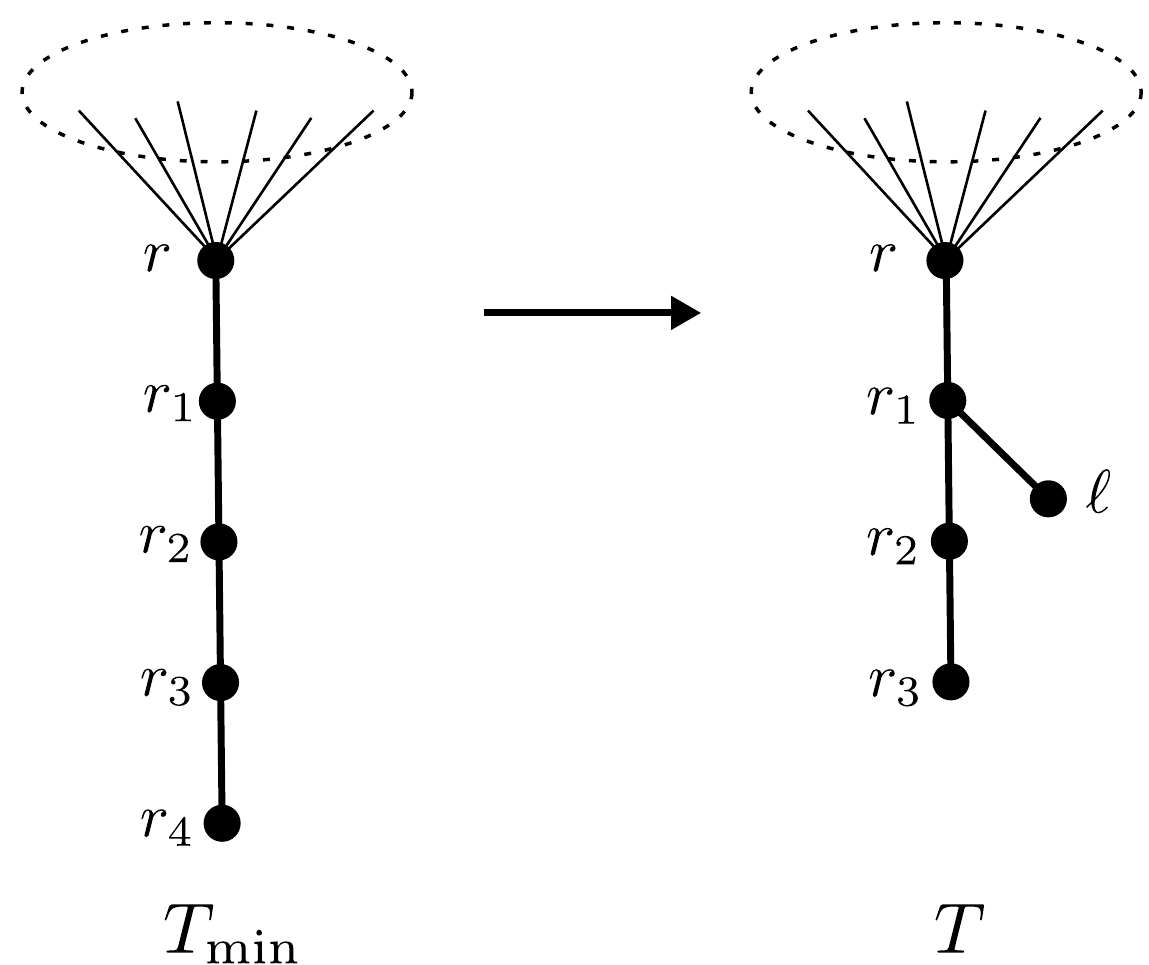}
\caption{An illustration of construction in Proposition \ref{prop:trans3}.}
\label{fig:trans3}
\end{figure}

\begin{proof}
  For $n\le 6$, our computer search verified the claim. So assume that 
  $n \ge 7$. Suppose that we have a vertex $r$ of degree $d$ in $T_{\min}$ incident
  to a 4-ray vertices $rr_1r_2r_3r_4$, as is shown in Figure \ref{fig:trans3}.
  We define a new tree $T$ in the following way.
  \begin{align*} 
    V(T) &= V(T_{\min}) - r_4 + \ell,\\
    E(T) &= E(T_{\min}) - r_3r_4 + r_1\ell \ .
  \end{align*}
	
  We define $\Delta = \sz(T_{\min}) - \sz(T)$ and our aim is to show that
  $\Delta > 0$\,. By the definition of weighted Szeged index	it suffices
  to calculate differences on the edges $\{r_1,r_2,r_3,r_4,r,\ell\}$. Also,
  the summand for the edge $r_3r_4$ in $\sz(T_{\min})$ is equal to the 
  summand of the edge $r_2r_3$ in $\sz(T)$\,. Thus,
  \begin{align*} 
    \Delta &= \left((d+2)\cdot4\cdot(n-4) + 4\cdot 3\cdot (n-3)
           + 4\cdot 2\cdot (n-2)\right) \\[3mm]
           &+\left(-(d+3)\cdot 4 \cdot (n-4) - 4 \cdot (n-1)
           - 5 \cdot 2 \cdot (n-2)\right) \\[3mm]
           &= (4dn - 16d + 28n - 84) + (-4dn + 16d + 26n + 72) = 2n - 12.
\end{align*}
Thus, $\Delta > 0$ for all $n \ge 7$. Therefore, $T_{\min}$ is not a tree with the minimum weighted
Szeged index among $n$-vertex trees, a contradiction.
\end{proof}

\vspace{3mm}

\begin{proposition}\label{prop:trans4}
  No vertex of degree 3 in  $T_{\min}$ is adjacent to two 3-rays.
\end{proposition}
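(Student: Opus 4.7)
The plan is to follow the contradiction-plus-local-surgery template used in Propositions \ref{prop:trans1}--\ref{prop:trans3}. Suppose towards a contradiction that $T_{\min}$ contains a vertex $r$ of degree $3$ with $N(r) = \{a_1, b_1, x\}$, where $r a_1 a_2 a_3$ and $r b_1 b_2 b_3$ are the two $3$-rays (so $\deg(a_1)=\deg(a_2)=\deg(b_1)=\deg(b_2)=2$ and $a_3, b_3$ are leaves), and where $x$ is the third neighbour leading to the rest of the tree. Let $X$ be the vertex set of the component of $T_{\min}-rx$ that contains $x$, so that $|X|=n-7$.

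I would then exhibit an explicit replacement tree $T$ on the same $n$ vertices that modifies only the local structure near $r$ while preserving both the subtree $T_{\min}[X]$ and the edge $rx$, and in which $r$ still has degree $3$ with the side of size $7$ unchanged. The replacement must avoid all configurations forbidden by earlier propositions — in particular, no $4$-ray (Proposition \ref{prop:trans3}) and no degree-$6$ vertex with two pendant leaves (Proposition \ref{prop:trans2}). A natural candidate is to re-route one of $a_3, b_3$ so that the six non-$r$ vertices form a shorter path plus a branching vertex carrying two pendants, producing a Kragujevac-style attachment at $r$. Because every edge with both endpoints in $X$, as well as the cut-edge $rx$ itself (whose endpoint degrees and $n$-partition are preserved by the surgery), contributes identically to $\sz(T_{\min})$ and to $\sz(T)$, the change $\Delta := \sz(T_{\min}) - \sz(T)$ reduces to the difference of the contributions of the edges living entirely inside the ``ray region''.

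Each of those contributions has the form $(\deg(u)+\deg(v))\cdot k\cdot(n-k)$ for small $k \leq 6$, so after substitution $\Delta$ simplifies to a linear polynomial $\alpha n + \beta$ in $n$ with explicit integer coefficients. The aim is to show $\alpha>0$ and that $\alpha n + \beta > 0$ for all $n$ exceeding some small threshold, handling the finitely many remaining small orders by the computer verification invoked at the start of Proposition \ref{prop:trans3}.

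The main obstacle is choosing the replacement tree well. A single candidate transformation may fail to yield $\Delta > 0$ for every structure of the subtree rooted at $x$, and it must simultaneously avoid every earlier-forbidden configuration. The proof may therefore need to branch on the immediate neighbourhood of $x$, or alternate between two symmetric surgeries of the two $3$-rays, analogously to the careful accounting that forced the thresholds $d\ge 6$ and $d\ge 10$ in Propositions \ref{prop:trans2} and \ref{prop:trans1}. Since here the degree of $r$ is fixed at $3$ rather than being a free parameter, I expect no degree-threshold hypothesis to appear, but a modest case split around $x$ (and the use of the standard bound on $\sum|X_i|^2$ as in Theorem \ref{thm:maximum}) to be the decisive technical ingredient.
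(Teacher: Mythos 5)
Your proposal correctly identifies the template (local surgery near the offending configuration, edge-by-edge comparison of contributions, reduction of $\Delta$ to a linear polynomial in $n$), but it stops exactly where the mathematical content of this proposition begins: you never commit to a replacement tree, and you say so yourself (``the main obstacle is choosing the replacement tree well''). Without an explicit $T$ and the accompanying computation there is no proof. Worse, the one structural constraint you do impose --- that the degree-$3$ vertex should still have degree $3$ after the surgery --- steers you away from the transformation that actually works. The paper replaces the two $3$-rays by \emph{three} $2$-rays attached to the same vertex, so that vertex's degree rises from $3$ to $4$; the resulting difference is
$\Delta = 30(n-3) - 20(n-2) - 3(n-1) - 7(n-7) = 2 > 0$,
a positive constant (note that your expectation $\alpha>0$ for the leading coefficient fails here: $\alpha=0$). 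If one instead insists on keeping the branch vertex at degree $3$ and redistributes the six ray vertices into two subtrees, the natural candidates all fail: two cherries give $\Delta=-4$, a $3$-ray plus a cherry gives $\Delta=-2$, a $2$-ray plus a $4$-ray gives $\Delta=30-4n$, and a leaf plus a degree-$3$ vertex carrying two $2$-rays gives $\Delta=72-8n$. So the decisive idea --- increasing the degree of the branch vertex --- is precisely what your sketch excludes.

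Two smaller points. Your requirement that the replacement tree avoid all configurations forbidden by the earlier propositions is unnecessary: the argument is by contradiction, so any tree of the same order with strictly smaller weighted Szeged index suffices, whether or not it is itself a candidate minimizer. And no bound on $\sum_i |X_i|^2$ and no case split on the neighbourhood of the third neighbour is needed: the rest of the tree interacts with the surgery only through the single cut edge, whose split $7$ versus $n-7$ is preserved and whose degree sum changes by exactly $1$, contributing the clean term $-7(n-7)$ to $\Delta$ with the unknown degree $d$ cancelling.
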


\begin{figure}[H]
\centering
\includegraphics[width=0.4\linewidth]{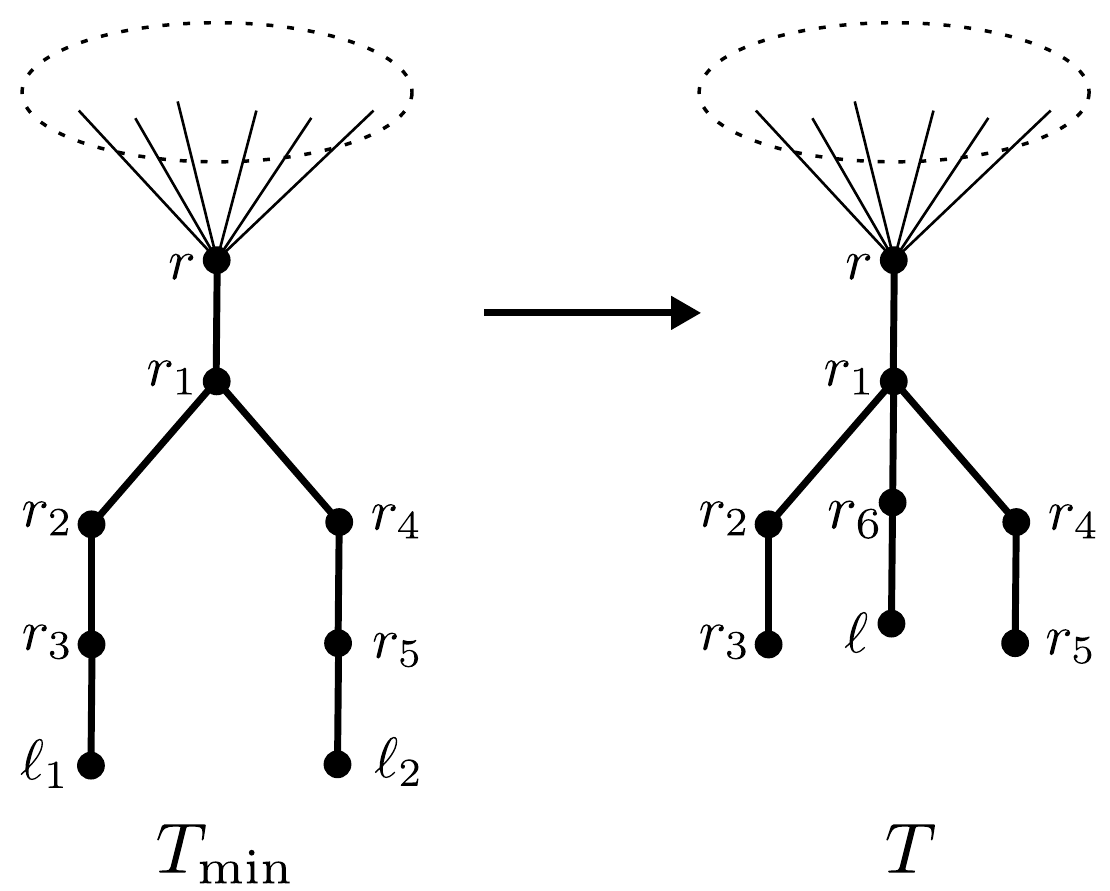}
\caption{An illustration of construction in Proposition \ref{prop:trans4}.}
\label{fig:trans4}
\end{figure}

\begin{proof}
  Suppose that the tree $T_{\min}$\,, shown in Figure \ref{fig:trans4}, is 
  a tree with minimal $\sz(T)$\,.
  Let us transform $T_{\min}$ to $T$ in a way
  as it is illustrated in the Figure \ref{fig:trans4}. In particular,
  \begin{align*}
    V(T) &= V(T_{\min}) - \ell_1 - \ell_2 + r_6 + \ell \\[3mm]
    E(T) &= E(T_{\min}) - r_3\ell_1 - r_5\ell_2 + r_1r_6 + r_6\ell \ .
  \end{align*}
  
  We define $\Delta = \sz(T_{\min}) - \sz(T)$\,. If we show that $\Delta > 0$
  will prove that transformation, depicted in Figure \ref{fig:trans4}, is
  minimizing the $\sz(T)$\,. It is obvious that $\Delta$ depends only on edges
  that are existing among vertices $\{r, r_1, r_2, r_3, r_4, r_5, r_6, \ell,
  \ell_1,  \ell_2\}$\,. Also, contributions of edges $r_3\ell_1$ and $r_5\ell_2$
  in the $T_{\min}$ is equal to the contributions of edges $r_2r_3$ and 
  $r_4r_5$ in the $T$\,. We label the degree of the vertex $r$ in $T_{\min}$
  by $d$ like in previous propositions. Thus,
  \begin{align*}
    \Delta &= 2\cdot\left[ (2+2)\cdot 2\cdot (n-2) + (2+3)\cdot 3\cdot (n-3)\right]
           + (d + 3)\cdot 7\cdot (n-7)\\[3mm]
           &- 3\cdot\left[(2 + 4)\cdot 2\cdot (n-2)\right] - (1+2)\cdot(n-1)
           - (d + 4)\cdot 7\cdot (n-7)\\[3mm]
           &= 30(n-3) - 20(n-2) - 3(n-1) - 7(n-7) = 2 > 0 \ .
  \end{align*}
  
  We obtained that for all $n > 7$ the $\Delta > 0$\,. This indicates that our
  assumption that $T_{\min}$ is a tree with minimum $\sz(T)$ is false, i.e. the
  transformation given in Figure \ref{fig:trans4} is minimizing the weighted
  Szeged index.
\end{proof}

\section{Conclusion}

We have been examining the extremal graphs with respect to weighted Szeged index.
It has been proven that the star graph has the maximum value of the $\sz(T)$ among
all trees. In addition, among all bipartite graphs, the balanced complete bipartite
graph achieves the maximum value of the weighted Szeged index. We did not succeed to
characterize graph (or graphs) that has the maximum $\sz(G)$ among all connected
graphs. Based on a computer investigation, it is believed that it would be the
balanced complete bipartite graph (see Conjecture \ref{con:maxG}).

Characterizing a graph (or graphs) that are minimizing weighted Szeged index is
more complex than finding those with maximum value. We assume that among all
connected graphs, it must be a tree (see Conjecture \ref{con:minG}). Then, the
minimal trees with respect to $\sz(T)$ obtained by computer and some of their 
properties are presented.

Proving Conjecture \ref{con:maxG} and \ref{con:minG} and characterizing corresponding
graphs remains a task for future research endeavors.

\baselineskip=.24in

\vspace{8mm}

\noindent{\it Acknowledgments\/}: The contribution was partially supported by ARRS Project P1-0383,
bilateral project BI-RS/18-19-026, and Ministry of Education, Science and
Technological Development of Republic of Serbia through the grant No. 174033.
The first and the third author would like to acknowledge the support
of the grant SVV-2017-260452.
The first author was supported by the Charles University Grant Agency, project
GA UK 1158216.

\end{document}